\def\dive{\operatorname{div}}
\numberwithin{equation}{section}
\newtheorem{theorem}{Theorem}[section]
\newtheorem{lemma}[theorem]{Lemma}
\newtheorem{definition}[theorem]{Definition}
\newtheorem{proposition}[theorem]{Proposition}
\newtheorem{remark}[theorem]{Remark}
\newtheorem{corollary}[theorem]{Corollary}
\begin{document}
	
\title[\hfil Equivalence between distributional and viscosity solutions\dots] {Equivalence between distributional and viscosity solutions for the double-phase equation}

\author[Y. Fang and C. Zhang  \hfil \hfilneg]{Yuzhou Fang and  Chao Zhang$^*$}

\thanks{$^*$ Corresponding author.}

\address{Yuzhou Fang \hfill\break School of Mathematics, Harbin Institute of Technology, Harbin 150001, China}
\email{18b912036@hit.edu.cn}

\address{Chao Zhang  \hfill\break School of Mathematics and Institute for Advanced Study in Mathematics, Harbin Institute of Technology, Harbin 150001, China}
\email{czhangmath@hit.edu.cn}

\subjclass[2010]{Primary: 35J92, 35D40; Secondary: 35C45, 46E30.}
\keywords{Equivalence; Double-phase equation; Viscosity solution; Distributional solution; $\mathcal{A}_{H(\cdot)}$-harmonic function; Comparison principle}

\maketitle

\begin{abstract}
We investigate the different notions of solutions to the double-phase equation
$$
-\dive(|Du|^{p-2}Du+a(x)|Du|^{q-2}Du)=0,
$$
which is characterized by the fact that both ellipticity and growth switch between two different types of polynomial according to the position. 
We introduce the $\mathcal{A}_{H(\cdot)}$-harmonic functions of nonlinear potential theory, and then show that $\mathcal{A}_{H(\cdot)}$-harmonic functions coincide with the distributional and viscosity solutions, respectively. This implies that the distributional and viscosity solutions are exactly the same.
\end{abstract}

\section{Introduction}
\label{sec-1}
Let $\Omega$ be a bounded domain in $\mathbb R^n$, $n\ge 2$. In this work, we are concerned with the relationship of distributional and viscosity solutions to the following double-phase problem
\begin{equation}
\label{main}
-\dive (|Du|^{p-2}Du+a(x)|Du|^{q-2}Du)=0 \quad \text{in } \Omega
\end{equation}
with some appropriate hypotheses, where  $1<p\leq q<\infty$. It is well-known that Eq. \eqref{main} emerges naturally as the Euler-Lagrange equation of the functional
$$
\mathcal{P}(u,\Omega):=\int_\Omega \Big(\frac 1p |Du|^p+\frac{a(x)}{q}|Du|^q\Big)\,dx,
$$
which was first introduced by Zhikov \cite{Zhi86, ZKO94}. The functional $\mathcal{P}$ can provide a model for characterizing the features of strongly anisotropic materials. To be precise, considering two diverse  materials with power hardening exponents $p$ and $q$, the coefficient $a(\cdot)$ determines the geometry of the composite composed of the two, relying on the fact that $x$ belongs to the set $\{a(x)=0\}$ or not. Also the functional $\mathcal{P}$ gives new examples concerning the Lavrentiev phenomenon in \cite{Zhi95,Zhi97}.

Over the last years, these functionals with non-standard growth conditions
$$
W^{1,1}(\Omega)\ni u\mapsto\int_\Omega F(x,u,Du)\,dx,\quad \nu|z|^p\leq F(x,u,z)\leq L(|z|^q+1)
$$
have been a surge of interest. For the case of an autonomous energy density of the type $F(x,u,Du)\equiv  F(Du)$, the regularity theory of minima of such functionals is by now well-understood from the seminal papers of Marcellini \cite{Mar89,Mar91,Mar96}. The study of nonautonomous functionals,
especially for the double-phase problem \eqref{main}, has been continued in a series of nice papers by Mingione et al. For example, the $C^{1,\alpha}$-regularity for local minimizers of $\mathcal{P}$ was derived by Colombo-Mingione in \cite{CM15}. It is shown that under the assumptions that
\begin{equation}
\label{1-2}
0\leq a(\cdot)\in C^{0,\alpha}(\Omega), \quad \alpha\in (0,1] \quad\text{and}\quad \frac{q}{p}<1+\frac{\alpha}{n},
\end{equation}
the minimizers of $\mathcal{P}$ belong to the class $C^{1,\beta}_{\rm loc}(\Omega)$ for some $\beta\in (0,1)$. Baroni-Colombo-Mingione completed the regularity results in \cite{BCM18} for the borderline case $\frac{q}{p}=1+\frac{\alpha}{n}$. Furthermore, a Harnack inequality for minimizers of $\mathcal{P}$ was established  by the same authors in \cite{BCM15}. If the minimizers of $\mathcal{P}$ are bounded, the assumption imposed  on $p, q$ in \cite{CM215} was relaxed to $q\leq p+\alpha$ such that the $C^{1,\beta}_{\rm loc}(\Omega)$ regularity holds.
For the inhomogeneous double-phase equation
$$
-\dive (|Du|^{p-2}Du+a(x)|Du|^{q-2}Du)=-\dive (|F|^{p-2}F+a(x)|F|^{q-2}F) \quad \text{in } \Omega,
$$
Colombo-Mingione \cite{CM16} proved the  Calder\'{o}n-Zygmund type estimate
$$
(|F|^p+a(x)|F|^q)\in L^\gamma_{\rm loc}(\Omega)\Rightarrow(|Du|^p+a(x)|Du|^q)\in L^\gamma_{\rm loc}(\Omega), \quad \gamma>1
$$
under the assumption \eqref{1-2}, and the  estimate above was improved by De Fillippis-Mingione \cite{DeFM} for the borderline case $\frac{q}{p}=1+\frac{\alpha}{n}$.  
Byun-Cho-Oh in \cite{BCO18} obtained the Calder\'{o}n-Zygmund estimates regarding a class of irregular obstacle problems with non-uniformly elliptic operator in divergence form of $(p,q)$-growth. 
For more results, see for instance \cite{Bre12,BCM16,ELM04,BO17} and the references therein.

From the mentioned works above, we can see that rather abundant research results have been derived for double-phase problems. However, there are few results concerning the viscosity solutions for such kind of equations. To this end, our interest in this work focuses on the different notions of solutions to Eq. \eqref{main}. We define naturally the distributional solutions to \eqref{main} based on integration by parts, owing to the divergence form of this equation. At the same time, if the coefficient $a(x)$ is of class $C^1(\Omega)$, then the notion of viscosity solutions is also applicable, which is defined according to pointwise touching test functions. Our aim is to show that the distributional solutions coincide with the viscosity solutions of \eqref{main}. In fact, the equivalence of different notions of solutions is an important topic, which was first investigated by Ishii \cite{Ish95} in the linear case. When it comes to the quasilinear case,  employing the full uniqueness machinery of the theory of viscosity solutions, Juutinen-Lindqvist-Manfredi in \cite{JLM01} verified the equivalence between distributional and viscosity solutions for the $p$-Laplace equation
$$
-\dive (|Du|^{p-2}Du)=0 \quad \text{in } \Omega,
$$
which is the special version of double-phase equation \eqref{main} in the case that $a(x)\equiv0$. Moreover, the equivalence of two different solutions of $p(x)$-Laplace equation
$$
-\dive (|Du|^{p(x)-2}Du)=0 \quad \text{in } \Omega
$$
was obtained by Juutinen-Lukkari-Parviainen in \cite{JLP10}. On the other hand, it is worth to mention that a shorter proof for the equivalence of distributional and viscosity solutions for the $p$-Laplace equation was recently given in \cite{JJ12} by virtue of a technical regularization procedure via infimal convolutions. We refer to \cite{MO19,Sil18} and references therein for elaborate details.

In this paper we revisit the methods developed by Juutinen-Lindqvist-Manfredi \cite{JLM01} relying on the uniqueness of solutions to show that the distributional and viscosity solutions of \eqref{main} coincide. Inspired by the known theory for $p$-harmonic functions and $p(x)$-harmonic functions (see \cite{Fuc90,HHKLM07,HK88,Lin86,Lin17}) which plays an important role in nonlinear potential theory, we introduce a crucial intermediate ingredient  so-called $\mathcal{A}_{H(\cdot)}$-harmonic function.  Then we prove the equivalence of two notions of solutions of \eqref{main} through justifying the distributional and viscosity solutions coincide with $\mathcal{A}_{H(\cdot)}$-harmonic functions, respectively. To the best of our knowledge, the result that distributional solutions and $\mathcal{A}_{H(\cdot)}$-harmonic functions of double-phase equation coincide is new. Our proof relies heavily on the well established theory of distributional solutions to Eq. \eqref{main}, such as the existence, uniqueness and regularity properties.

This paper is organized as follows. In Section \ref{sec-2}, we first state some basic properties of function spaces and notions of solutions. Then we give some auxiliary results that will be used later. Section \ref{sec-3} is devoted to showing that distributional supersolutions and $\mathcal{A}_{H(\cdot)}$-superharmonic functions are the same, and the equivalence between $\mathcal{A}_{H(\cdot)}$-superharmonic functions and viscosity supersolutions is proved in Section \ref{sec-4}. In Section \ref{sec-5}, we verify the comparison principle for viscosity solutions, which is the indispensable element of the proof.

\section{Preliminaries}
\label{sec-2}

In this section, we summarize some basic properties of the Musielak-Orlicz-Sobolev space $W^{1,H(\cdot)}(\Omega)$. In addition, we give the definition of $\mathcal{A}_{H(\cdot)}$-harmonic functions, different notions of solutions to Eq. \eqref{main} together with some auxiliary results.

\subsection{Function spaces}

In the rest of this paper, we assume that
\begin{equation}
\label{2-0}
0\leq a(\cdot)\in C^{0,\alpha}(\Omega), \quad \alpha\in (0,1] \quad\text{and}\quad \frac{q}{p}\leq1+\frac{\alpha}{n}.
\end{equation}
For all $x\in\Omega$ and $\xi\in \mathbb{R}^n$, we shall use the notation
$$
H(x,\xi):=|\xi|^p+a(x)|\xi|^q.
$$
With abuse of notation, we shall also denote $H(x,\xi)$ when $\xi\in\mathbb{R}$. Observe that  $H:\Omega\times[0,+\infty)\rightarrow[0,+\infty)$ is a Musielak-Orlicz function such that satisfies $(\Delta_2)$ and $(\nabla_2)$ conditions (see \cite{BS14,Mus83,Sid13}).

The Musielak-Orlicz space $L^{H(\cdot)}(\Omega)$ is defined as
$$
L^{H(\cdot)}(\Omega):=\left\{u:\Omega\rightarrow \mathbb{R }  \text{ measurable}:\varrho_H(u)<\infty\right\},
$$
endowed with the norm
$$
\|u\|_{L^{H(\cdot)}(\Omega)}:=\inf\left\{\lambda>0:\varrho_H(\frac{u}{\lambda})\leq1\right\},
$$
where $$\varrho_H(u):=\int_\Omega H(x,u)\,dx=\int_\Omega|u|^p+a(x)|u|^q\,dx$$ is called $\varrho_H$-modular.

 The space $L^{H(\cdot)}(\Omega)$ is a separable, uniformly convex Banach space.
From the definitions of $\varrho_H$-modular and norm, we can find
\begin{equation}
\label{2-1}
\min\left\{\|u\|^p_{L^{H(\cdot)}(\Omega)}, \|u\|^q_{L^{H(\cdot)}(\Omega)}\right\}\leq \varrho_H(u)\leq\max\left\{\|u\|^p_{L^{H(\cdot)}(\Omega)}, \|u\|^q_{L^{H(\cdot)}(\Omega)}\right\}.
\end{equation}
It follows from \eqref{2-1} that
$$
\|u_n-u\|_{L^{H(\cdot)}(\Omega)}\rightarrow0 \quad \Longleftrightarrow \quad \varrho_H(u_n-u)\rightarrow0,
$$
which indicates the equivalence of convergence in $\varrho_H$-modular and in norm.

The Musielak-Orlicz-Sobolev space $W^{1,H(\cdot)}(\Omega)$ is the set of those functions $u\in L^{H(\cdot)}(\Omega)$ satisfying their distributional gradients $Du\in L^{H(\cdot)}(\Omega)$. We endow the space $W^{1,H(\cdot)}(\Omega)$ with the norm
$$
\|u\|_{W^{1,{H(\cdot)}}(\Omega)}:=\|u\|_{L^{H(\cdot)}(\Omega)}+\|Du\|_{L^{H(\cdot)}(\Omega)}.
$$
The space $W^{1,H(\cdot)}(\Omega)$ is a separable and reflexible Banach space. The local space $W^{1,H(\cdot)}_{\rm loc}(\Omega)$ is composed of those functions belonging to $W^{1,H(\cdot)}(\Omega')$ for any subdomain $\Omega'$ compactly involved in $\Omega$. Finally, we denote by $W^{1,H(\cdot)}_0(\Omega)$ the closure of $C^{\infty}_0(\Omega)$ in $W^{1,H(\cdot)}(\Omega)$. Indeed, the conditions \eqref{2-0} ensure that the set $C^\infty_0(\Omega)$ is dense in $W^{1,H(\cdot)}(\Omega)$ (see \cite{CDeF20}).

\medskip

The following embedding theorem can be found in \cite{CS16}.

\begin{lemma}
Let $p^*=\frac{np}{n-p}$ if $n<p$ and $p^*=\infty$ otherwise.
\begin{enumerate}
  \item $L^{H(\cdot)}(\Omega)\hookrightarrow L^r(\Omega)$ and $W^{1,H(\cdot)}_0(\Omega)\hookrightarrow W^{1,r}_0(\Omega)$ for all $r\in[1,p]$.

\smallskip

  \item if $n\neq p$, then
  $$
  W^{1,H(\cdot)}_0(\Omega)\hookrightarrow L^r(\Omega) \quad\text{for all } r\in[1,p^*];
  $$
  if $n=p$, then
  $$
  W^{1,H(\cdot)}_0(\Omega)\hookrightarrow L^r(\Omega) \quad\text{for all } r\in[1,\infty).
  $$
  
  \smallskip
  
  \item if $p\leq n$, then
  $$
  W^{1,H(\cdot)}_0(\Omega)\hookrightarrow\hookrightarrow L^r(\Omega) \quad\text{for all }  r\in[1,p^*);
  $$
  if $p>n$, then
  $$
  W^{1,H(\cdot)}_0(\Omega)\hookrightarrow\hookrightarrow L^\infty(\Omega).
  $$
\end{enumerate}
\end{lemma}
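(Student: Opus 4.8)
The plan is to deduce the entire lemma from the classical Sobolev and Rellich--Kondrachov theorems by exploiting the pointwise inequality $H(x,\xi)=|\xi|^p+a(x)|\xi|^q\ge|\xi|^p$, which holds because $a\ge0$. First I would record the single building block $L^{H(\cdot)}(\Omega)\hookrightarrow L^p(\Omega)$: for any $u$ with $\|u\|_{L^{H(\cdot)}(\Omega)}\le1$, the right-hand side of \eqref{2-1} gives
$$
\int_\Omega|u|^p\,dx\le\int_\Omega H(x,u)\,dx=\varrho_H(u)\le\max\big\{\|u\|_{L^{H(\cdot)}(\Omega)}^p,\|u\|_{L^{H(\cdot)}(\Omega)}^q\big\}\le1,
$$
and a homogeneity argument (apply this to $u/\|u\|_{L^{H(\cdot)}(\Omega)}$) upgrades it to $\|u\|_{L^p(\Omega)}\le\|u\|_{L^{H(\cdot)}(\Omega)}$ for every $u\in L^{H(\cdot)}(\Omega)$. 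Since $\Omega$ is bounded, Hölder's inequality then yields $L^p(\Omega)\hookrightarrow L^r(\Omega)$, with embedding constant $|\Omega|^{1/r-1/p}$, for each $r\in[1,p]$; this is the first assertion of item~(1).

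Next I would apply the scalar embedding just obtained to $u$ and to each component of $Du$, obtaining
$$
\|u\|_{W^{1,p}(\Omega)}=\|u\|_{L^p(\Omega)}+\|Du\|_{L^p(\Omega)}\le\|u\|_{L^{H(\cdot)}(\Omega)}+\|Du\|_{L^{H(\cdot)}(\Omega)}=\|u\|_{W^{1,H(\cdot)}(\Omega)},
$$
hence $W^{1,H(\cdot)}(\Omega)\hookrightarrow W^{1,p}(\Omega)$ continuously, and a fortiori $W^{1,H(\cdot)}(\Omega)\hookrightarrow W^{1,r}(\Omega)$ for $r\in[1,p]$. To check that the zero-boundary-value classes are respected, take $u\in W^{1,H(\cdot)}_0(\Omega)$ and a sequence $\varphi_j\in C_0^\infty(\Omega)$ with $\varphi_j\to u$ in $W^{1,H(\cdot)}(\Omega)$ — such a sequence exists by the definition of $W^{1,H(\cdot)}_0(\Omega)$, the density of $C_0^\infty(\Omega)$ being guaranteed under \eqref{2-0}. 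Continuity of the embedding forces $\varphi_j\to u$ in $W^{1,r}(\Omega)$ as well, so $u\in W^{1,r}_0(\Omega)$ and $W^{1,H(\cdot)}_0(\Omega)\hookrightarrow W^{1,r}_0(\Omega)$ for all $r\in[1,p]$. This completes item~(1).

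Finally, items~(2) and~(3) follow by composing $W^{1,H(\cdot)}_0(\Omega)\hookrightarrow W^{1,p}_0(\Omega)$ with the standard embedding theorems on the bounded domain $\Omega$: the continuous Sobolev embeddings $W^{1,p}_0(\Omega)\hookrightarrow L^{p^*}(\Omega)$ for $p<n$, $W^{1,p}_0(\Omega)\hookrightarrow L^r(\Omega)$ for every finite $r$ when $p=n$, and $W^{1,p}_0(\Omega)\hookrightarrow L^\infty(\Omega)$ when $p>n$ give item~(2); the Rellich--Kondrachov theorem, namely $W^{1,p}_0(\Omega)\hookrightarrow\hookrightarrow L^r(\Omega)$ for $r<p^*$ when $p\le n$, together with the compactness of $W^{1,p}_0(\Omega)\hookrightarrow C^{0,1-n/p}(\overline{\Omega})\hookrightarrow L^\infty(\Omega)$ when $p>n$, gives item~(3). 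I do not expect any genuine obstacle here; the only points requiring a little attention are the passage from the $\varrho_H$-modular estimate to an honest norm inequality via \eqref{2-1}, and the density argument ensuring that the embeddings preserve membership in the spaces $W^{1,r}_0(\Omega)$.
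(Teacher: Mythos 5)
The paper does not prove this lemma; it cites Colasuonno--Squassina \cite{CS16} and moves on, so there is no in-text argument to compare against. Your self-contained derivation is correct. The core step, $\|u\|_{L^p(\Omega)}\le\|u\|_{L^{H(\cdot)}(\Omega)}$, follows cleanly from $|\xi|^p\le H(x,\xi)$ together with \eqref{2-1} via the homogeneity argument you give (equivalently: for any $\lambda>\|u\|_{L^{H(\cdot)}(\Omega)}$ one has $\varrho_H(u/\lambda)\le1$, hence $\int_\Omega|u/\lambda|^p\,dx\le1$, and one lets $\lambda$ decrease to the norm). Applying this to $u$ and to $Du$ gives the continuous embedding $W^{1,H(\cdot)}(\Omega)\hookrightarrow W^{1,p}(\Omega)$; the density of $C^\infty_0(\Omega)$ in $W^{1,H(\cdot)}_0(\Omega)$, which is built into the definition of that space, correctly transfers membership in $W^{1,r}_0(\Omega)$; and composing with H\"older's inequality on the bounded domain and with the classical Sobolev and Rellich--Kondrachov theorems for $W^{1,p}_0(\Omega)$ yields all three items, the compactness in item (3) persisting because precomposition of a compact map with a bounded linear map remains compact. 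One small editorial remark: the statement's clause ``$p^*=np/(n-p)$ if $n<p$'' is evidently a misprint for $p<n$ (otherwise $p^*$ would be negative); your proof implicitly uses the correct reading, as it must.
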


Now we give a Poincar\'{e} type inequality coming from Proposition 2.18 in \cite{CS16}.

\begin{lemma}
\label{lem2-1}
If $u\in W^{1,H(\cdot)}_0(\Omega)$, then there is a constant $C>0$, independent of $u$, such that
\begin{equation}
\label{2-2}
\|u\|_{L^{H(\cdot)}(\Omega)}\leq C\|Du\|_{L^{H(\cdot)}(\Omega)}.
\end{equation}
\end{lemma}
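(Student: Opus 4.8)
The plan is to derive the Musielak--Orlicz Poincar\'e inequality from its classical analogues in $W^{1,p}_0(\Omega)$, and then to transfer the resulting $L^p$/$L^q$ bounds first to the modular $\varrho_H$ and finally to the $L^{H(\cdot)}$-norm. We may assume $Du\not\equiv 0$ in $\Omega$; otherwise $u$ is constant and, belonging to $W^{1,H(\cdot)}_0(\Omega)$, it must vanish identically, so \eqref{2-2} is trivial. By the embedding lemma stated above (part (1) with $r=p$), $u\in W^{1,H(\cdot)}_0(\Omega)$ implies $u\in W^{1,p}_0(\Omega)$, hence the classical Poincar\'e inequality on the bounded domain $\Omega$ gives $\|u\|_{L^p(\Omega)}\le C\|Du\|_{L^p(\Omega)}$.

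Next I would bound the weighted term $\int_\Omega a(x)|u|^q\,dx$ appearing in $\varrho_H(u)$. This is where assumption \eqref{2-0} enters: since $\alpha\le 1$ we get $\frac qp\le 1+\frac\alpha n\le\frac{n+1}{n}$, and an elementary manipulation shows $\frac{n+1}{n}\le\frac{n}{n-p}$ for every $p\in(1,n)$, so that $q\le p^*$ in the case $p<n$; when $p=n$ the space $W^{1,p}_0(\Omega)$ embeds into every $L^q(\Omega)$ with $q<\infty$, and when $p>n$ it embeds into $L^\infty(\Omega)$. In all cases the Sobolev--Poincar\'e inequality yields $\|u\|_{L^q(\Omega)}\le C\|Du\|_{L^p(\Omega)}$. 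Since $a$ is bounded by \eqref{2-0}, we combine the two estimates to obtain
$$
\varrho_H(u)=\int_\Omega\big(|u|^p+a(x)|u|^q\big)\,dx\le\|u\|_{L^p(\Omega)}^p+\|a\|_{L^\infty(\Omega)}\|u\|_{L^q(\Omega)}^q\le C\big(\|Du\|_{L^p(\Omega)}^p+\|Du\|_{L^p(\Omega)}^q\big),
$$
and since $\|Du\|_{L^p(\Omega)}^p=\int_\Omega|Du|^p\,dx\le\varrho_H(Du)$, this gives $\varrho_H(u)\le C\big(\varrho_H(Du)+\varrho_H(Du)^{q/p}\big)$.

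To pass to norms, I would normalize. Set $\lambda:=\|Du\|_{L^{H(\cdot)}(\Omega)}>0$ and apply the previous estimate to $u/\lambda$: since $\|D(u/\lambda)\|_{L^{H(\cdot)}(\Omega)}=1$, inequality \eqref{2-1} gives $\varrho_H(D(u/\lambda))\le 1$, whence $\varrho_H(u/\lambda)\le 2C$. The lower bound in \eqref{2-1} then forces $\min\{\|u/\lambda\|_{L^{H(\cdot)}(\Omega)}^p,\|u/\lambda\|_{L^{H(\cdot)}(\Omega)}^q\}\le 2C$, so $\|u/\lambda\|_{L^{H(\cdot)}(\Omega)}\le\max\{(2C)^{1/p},(2C)^{1/q}\}=:C_0$; multiplying through by $\lambda$ yields \eqref{2-2}.

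The only real obstacle is bookkeeping: the modular $\varrho_H$ is not homogeneous, so a naive rescaling is unavailable, and the gradient controls only the $L^p$-part of $\varrho_H$ uniformly (not the $L^q$-part), which is what forces the detour through the classical embedding $W^{1,p}_0(\Omega)\hookrightarrow L^q(\Omega)$ and hence the verification $q\le p^*$ --- precisely the place where the restriction $\frac qp\le 1+\frac\alpha n$ (together with $\alpha\le1$) cannot be dispensed with. With that in hand, the normalization via \eqref{2-1} closes the argument.
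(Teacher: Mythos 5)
Your proof is correct. Note that the paper does not supply an argument for this lemma at all: it simply cites Proposition~2.18 of \cite{CS16}, so there is no in-paper proof to compare against, and you have essentially reconstructed the natural argument that the cited reference encapsulates. The structure you use---reduce to $W^{1,p}_0(\Omega)$ via part~(1) of the embedding lemma, invoke the classical Poincar\'e and Sobolev inequalities to control both $\|u\|_{L^p}$ and $\|u\|_{L^q}$ by $\|Du\|_{L^p}$, observe that $q<p^*$ follows from $\frac qp\le 1+\frac\alpha n\le\frac{n+1}{n}<\frac{n}{n-p}$ for $1<p<n$, and then pass from the modular inequality to the Luxemburg-norm inequality by applying the estimate to $u/\|Du\|_{L^{H(\cdot)}(\Omega)}$ and using \eqref{2-1}---is exactly the standard route, and every step checks out, including the boundedness of $a$ from \eqref{2-0} on the bounded domain and the two-case bookkeeping $\min\{t^p,t^q\}\le 2C\Rightarrow t\le\max\{(2C)^{1/p},(2C)^{1/q}\}$. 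The only point worth emphasizing, which you correctly identify, is that the nonhomogeneity of $\varrho_H$ makes the normalization step genuinely necessary rather than a formality.
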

For more details about the space $W^{1,H(\cdot)}(\Omega)$, we refer the readers to \cite{CDeF20,CS16,LD18}.
\subsection{Notions of solutions}

Define
$$
A(x,\xi):=|\xi|^{p-2}\xi+a(x)|\xi|^{q-2}\xi,
$$
for all $x\in \Omega$ and $\xi\in\mathbb{R}^n$.  Now we state the diverse type of solutions to  \eqref{main}, and the definition of $\mathcal{A}_{H(\cdot)}$-superharmonic ($\mathcal{A}_{H(\cdot)}$-subharmonic) functions.

\begin{definition} [distributional solution]
\label{def2-1}
A function $u\in W^{1,H(\cdot)}_{\rm loc}(\Omega)$ is called a distributional superslotion to \eqref{main}, if
$$
\int_\Omega\langle A(x,Du), D\eta\rangle\,dx\geq0
$$
for every nonnegative function $\eta\in W^{1,H(\cdot)}_0(\Omega)$. The inequality is converse for distributional subsolution. We say that $u\in W^{1,H(\cdot)}_{\rm loc}(\Omega)$ is a distributional solution of \eqref{main}  if and only if $u$ is both super- and subsolution, that is
$$
\int_\Omega\langle A(x,Du), D\eta\rangle\,dx=0
$$
for each $\eta\in W^{1,H(\cdot)}_0(\Omega)$.
\end{definition}

\begin{definition}[$\mathcal{A}_{H(\cdot)}$-harmonic function]
\label{def2-2}
We say that $u:\Omega\rightarrow(-\infty,\infty]$ is a $\mathcal{A}_{H(\cdot)}$-superharmonic function in $\Omega$, if
\begin{enumerate}
  \item $u$ is lower semicontinuous in $\Omega$;
  
    \smallskip
    
  \item $u$ is finite a.e. in $\Omega$;
  
    \smallskip
    
  \item for any subdomain $D\subset\subset\Omega$ the comparison principle holds: when $h\in C(\overline{D})$ is a distributional solution to \eqref{main}, and $u\geq h$ on $\partial D$, then
      $$
      u\geq h \quad \text{in }  D.
      $$
\end{enumerate}
If $-u$ is $\mathcal{A}_{H(\cdot)}$-superharmonic, then $u:\Omega\rightarrow[-\infty,\infty)$ is called $\mathcal{A}_{H(\cdot)}$-subharmonic function. $\mathcal{A}_{H(\cdot)}$-harmonic function means that it is both $\mathcal{A}_{H(\cdot)}$-superharmonic and $\mathcal{A}_{H(\cdot)}$-subharmonic.
\end{definition}

\begin{definition}[viscosity solution]
\label{def2-3}
A lower semicontinuous function $u:\Omega\rightarrow(-\infty,\infty]$ is a viscosity supersolution of \eqref{main} in $\Omega$, if $u$ is finite a.e. in $\Omega$ and for each $\varphi\in C^2(\Omega)$ such that
\begin{equation}
\label{2-3}
\begin{cases}
\varphi(x_0)=u(x_0)  & x_0\in\Omega,\\
\varphi(x)<u(x) & x\neq x_0,\\
D\varphi(x_0)\neq 0,
\end{cases}
\end{equation}
there holds
\begin{equation}
\label{2-4}
-\dive A(x_0,D\varphi(x_0))\geq0.
\end{equation}
A function $u$ is viscosity subsolution, when $-u$ is a viscosity supersolution of \eqref{main}. A function $u$ is called viscosity solution to \eqref{main} if and only if it is viscosity super- and subsolution.
\end{definition}

\begin{remark}
With the condition \eqref{2-3}, we say that the test function $\varphi$ touches $u$ from below at point $x_0\in \Omega$. In the case that $2\leq p\leq q<\infty$, the equation is pointwise well-defined, so the requirement, $D\varphi(x_0)\neq0$ in \eqref{2-3}, can be eliminated. At this time, $-\dive A(x_0,D\varphi(x_0))=0$.
\end{remark}

\subsection{Auxiliary results}

Now we first provide a comparison principle for distributional solutions. Set $f_+:=\max\{f, 0\}$.
\begin{proposition}[comparison principle]
\label{pro2-4}
Let $u, v\in W^{1,H(\cdot)}_{\rm loc}(\Omega)$ be such that $(u-v)_+\in W^{1,H(\cdot)}_0(\Omega)$. If
$$
\int_\Omega\langle A(x,Du), D\varphi\rangle\,dx\leq \int_\Omega\langle A(x,Dv), D\varphi\rangle\,dx
$$
for each nonnegative $\varphi\in W^{1,H(\cdot)}_0(\Omega)$, then
$$
u\leq v \quad \text{a.e.   in } \Omega.
$$
\end{proposition}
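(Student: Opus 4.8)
The plan is to test the two inequalities against a single well-chosen nonnegative function and then exploit the strict monotonicity of the vector field $\xi\mapsto A(x,\xi)$. Concretely, I would take $\varphi:=(u-v)_+$, which is admissible by hypothesis since $(u-v)_+\in W^{1,H(\cdot)}_0(\Omega)$ and is nonnegative. Subtracting the two weak inequalities gives
\begin{equation*}
\int_\Omega\langle A(x,Du)-A(x,Dv), D(u-v)_+\rangle\,dx\le 0.
\end{equation*}
On the set $\{u>v\}$ we have $D(u-v)_+=Du-Dv$, while on $\{u\le v\}$ the gradient of $(u-v)_+$ vanishes a.e.; hence the integral reduces to
\begin{equation*}
\int_{\{u>v\}}\langle A(x,Du)-A(x,Dv), Du-Dv\rangle\,dx\le 0.
\end{equation*}

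The next step is the pointwise monotonicity estimate for $A$. For each fixed $x$, $A(x,\xi)=|\xi|^{p-2}\xi+a(x)|\xi|^{q-2}\xi$ is the gradient in $\xi$ of the strictly convex function $\xi\mapsto \frac1p|\xi|^p+\frac{a(x)}{q}|\xi|^q$ (strictly convex because the $p$-part already is, for $1<p<\infty$), so for all $\xi_1\ne\xi_2$ one has the strict inequality $\langle A(x,\xi_1)-A(x,\xi_2),\xi_1-\xi_2\rangle>0$; more quantitatively one has the standard bound $\langle A(x,\xi_1)-A(x,\xi_2),\xi_1-\xi_2\rangle\ge c(|\xi_1|^{p-2}+|\xi_2|^{p-2})|\xi_1-\xi_2|^2$ when $p\ge2$, and an analogous lower bound of the form $c\,|\xi_1-\xi_2|^p(|\xi_1|+|\xi_2|)^{p-2}$-type for $1<p<2$, with the $a(x)$-term contributing a further nonnegative quantity. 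Combining this with the displayed inequality forces $\langle A(x,Du)-A(x,Dv),Du-Dv\rangle=0$ a.e. on $\{u>v\}$, and by strictness of the monotonicity this gives $Du=Dv$ a.e. on $\{u>v\}$, hence $D(u-v)_+=0$ a.e. in $\Omega$.

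From $D(u-v)_+=0$ a.e. and $(u-v)_+\in W^{1,H(\cdot)}_0(\Omega)$, the Poincar\'e inequality of Lemma \ref{lem2-1} yields $\|(u-v)_+\|_{L^{H(\cdot)}(\Omega)}\le C\|D(u-v)_+\|_{L^{H(\cdot)}(\Omega)}=0$, so $(u-v)_+=0$ a.e., i.e. $u\le v$ a.e. in $\Omega$. (Alternatively one can note directly that a $W^{1,H(\cdot)}_0$ function with vanishing gradient is zero.)

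The step I expect to require the most care is making the monotonicity argument fully rigorous simultaneously for the ranges $1<p<2$ and $p\ge2$: in the singular case $1<p<2$ the clean quadratic lower bound fails and one must instead use the well-known inequality $\langle|\xi_1|^{p-2}\xi_1-|\xi_2|^{p-2}\xi_2,\xi_1-\xi_2\rangle\ge c_p\,|\xi_1-\xi_2|^2\big(1+|\xi_1|^2+|\xi_2|^2\big)^{(p-2)/2}$ (or an equivalent formulation), noting it still forces the integrand to vanish only when $\xi_1=\xi_2$; since the $a(x)|\xi|^{q-2}\xi$ term is itself monotone, it only helps and can be discarded for a lower bound. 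A minor subtlety is ensuring the integrals are finite so that "$\int(\cdots)\le 0$ with nonnegative integrand" is legitimate, which follows because $u,v\in W^{1,H(\cdot)}_{\rm loc}(\Omega)$ together with $(u-v)_+\in W^{1,H(\cdot)}_0(\Omega)$ control $Du$ and $Dv$ in $L^{H(\cdot)}$ on $\{u>v\}$, via Young's inequality applied to the growth bounds $|A(x,\xi)|\lesssim |\xi|^{p-1}+a(x)|\xi|^{q-1}$.
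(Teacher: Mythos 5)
Your proof is correct and takes essentially the same route as the paper's: test with $\varphi=(u-v)_+$, subtract, and invoke strict monotonicity of $\xi\mapsto A(x,\xi)$ to force $D(u-v)_+=0$ a.e., then conclude via Poincar\'e (Lemma \ref{lem2-1}). The paper compresses the monotonicity and Poincar\'e steps into the phrase ``after straightforward computation,'' and your write-up is exactly the expected expansion of that phrase.
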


\begin{proof}
Taking $\varphi:=(u-v)_+$ as a testing function to obtain
$$
\int_\Omega\langle A(x,Du)-A(x,Dv), D(u-v)_+\rangle\,dx\leq0.
$$
After straightforward computation, it follows that
$$
u\leq v \quad \text{a.e. in } \Omega.
$$
\end{proof}

\begin{proposition}[Caccioppoli type inequality]
\label{pro2-5}
Let $u$ be a distributional solution to Eq. \eqref{main}. Then for any $\zeta\in C^\infty_0(\Omega)$ with $0\leq\zeta\leq1$, there holds
$$
\int_\Omega\zeta^q H(x,Du)\,dx\leq C(p,q)\int_\Omega H(x,u\cdot D\zeta)\,dx.
$$
\end{proposition}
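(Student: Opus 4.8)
The plan is to test the weak formulation with $\eta := \zeta^q u$, which is an admissible test function lying in $W^{1,H(\cdot)}_0(\Omega)$ since $\zeta \in C^\infty_0(\Omega)$. Computing the gradient gives $D\eta = \zeta^q Du + q\zeta^{q-1} u\, D\zeta$, so that $0 = \int_\Omega \langle A(x,Du), D\eta\rangle\,dx$ becomes
\[
\int_\Omega \zeta^q \langle A(x,Du), Du\rangle\,dx = -q\int_\Omega \zeta^{q-1} u\, \langle A(x,Du), D\zeta\rangle\,dx.
\]
The left-hand side is exactly $\int_\Omega \zeta^q H(x,Du)\,dx$, because $\langle A(x,\xi),\xi\rangle = |\xi|^p + a(x)|\xi|^q = H(x,\xi)$ by the definition of $A$ and $H$.

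The second step is to bound the right-hand side from above. I would split $\langle A(x,Du),D\zeta\rangle = |Du|^{p-2}\langle Du, D\zeta\rangle + a(x)|Du|^{q-2}\langle Du, D\zeta\rangle$ and estimate each term by Cauchy--Schwarz, getting a bound by $\zeta^{q-1}|u|\,(|Du|^{p-1}|D\zeta| + a(x)|Du|^{q-1}|D\zeta|)$. Then Young's inequality with exponents $p/(p-1), p$ on the first piece and $q/(q-1), q$ on the second piece, with a small parameter $\varepsilon$, absorbs the $|Du|^{p}$ and $a(x)|Du|^q$ factors (carrying along $\zeta^q$ since $\zeta^{(q-1)\cdot p/(p-1)} \le \zeta^q$ and similarly for the $q$-term as $0\le\zeta\le 1$) into the left-hand side. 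The remaining terms are controlled by $\int_\Omega \zeta^{(q-1)p}\,|u|^p |D\zeta|^p + a(x)\zeta^{(q-1)q}|u|^q|D\zeta|^q\,dx$, and using $0\le \zeta\le 1$ once more this is at most $\int_\Omega |u D\zeta|^p + a(x)|u D\zeta|^q\,dx = \int_\Omega H(x, u\cdot D\zeta)\,dx$. Choosing $\varepsilon$ small depending only on $p,q$ and rearranging yields the claimed constant $C(p,q)$.

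The only genuine subtlety — more a bookkeeping point than a real obstacle — is the admissibility of the test function: one should first verify $\zeta^q u \in W^{1,H(\cdot)}_0(\Omega)$, which follows because $u \in W^{1,H(\cdot)}_{\rm loc}(\Omega)$ and $\zeta$ has compact support in $\Omega$, so that $\zeta^q u$ and its gradient $\zeta^q Du + q\zeta^{q-1}uD\zeta$ both lie in $L^{H(\cdot)}(\Omega)$ with support in a fixed compact subset, and one approximates by smooth functions. A standard cutoff/truncation or density argument (using that $C^\infty_0(\Omega)$ is dense, as recalled after the definition of $W^{1,H(\cdot)}_0(\Omega)$) justifies plugging it into Definition \ref{def2-1}. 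Everything else is the routine Young's inequality manipulation sketched above.
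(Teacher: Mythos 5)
Your proposal is correct and follows essentially the same route as the paper: test with $\eta = \zeta^q u$, identify $\langle A(x,Du),Du\rangle = H(x,Du)$, and apply Young's inequality with exponents $p/(p-1),p$ and $q/(q-1),q$, using $q\ge p$ (so $(q-1)p/(p-1)\ge q$ and $\zeta^{(q-1)p/(p-1)}\le \zeta^q$) together with $0\le\zeta\le1$ to absorb the gradient terms and control the remainder by $\int_\Omega H(x,u\,D\zeta)\,dx$. Your extra remarks on the admissibility of the test function are a harmless addition that the paper leaves implicit.
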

\begin{proof}
Choosing $\eta:=\zeta^q u$ as a test function in the weak form of Eq. \eqref{main} yields
$$
0=\int_\Omega\langle A(x,Du), D\eta\rangle\,dx.
$$
We further get by Young's inequality with $\varepsilon$ that
\begin{equation*}
\begin{split}
\int_\Omega\zeta^q H(x,Du)\,dx&\leq\int_\Omega\langle A(x,Du),q \zeta^{q-1}u\cdot D\zeta\rangle\,dx\\
&\leq\varepsilon\int_\Omega \zeta^\frac{(q-1)p}{p-1}|Du|^p\,dx+C(\varepsilon)\int_\Omega q^p|u|^p|D\zeta|^p\,dx\\
&\quad+\varepsilon\int_\Omega a(x)\zeta^q|Du|^q\,dx+C(\varepsilon)\int_\Omega q^q a(x)|u|^q|D\zeta|^q\,dx\\
&\leq\varepsilon\int_\Omega \zeta^q H(x,Du)\,dx+q^qC(\varepsilon)\int_\Omega H(x,u\cdot D\zeta)\,dx,
\end{split}
\end{equation*}
where in the last inequality we used the fact that $\frac{(q-1)p}{p-1}\geq q$ and $0\leq\zeta\leq1$. Therefore, taking a suitable value of $\varepsilon$ arrives at
$$
\int_\Omega\zeta^q H(x,Du)\,dx\leq C(p,q)\int_\Omega H(x,u\cdot D\zeta)\,dx.
$$
\end{proof}

We conclude this section by characterizing the viscosity properties of distributional solutions to Eq. \eqref{2-5} that approximates Eq. \eqref{main}.

\begin{lemma}
\label{lem2-6}
Let $a(x)\in C^1(\Omega)$ and $u_\varepsilon\in W^{1,H(\cdot)}(\Omega)$ be a distribution solution to
\begin{equation}
\label{2-5}
-\dive A(x,Du)=\varepsilon.
\end{equation}
Assume that $\varphi\in C^2(\Omega)$ satisfies $u_\varepsilon(x_0)=\varphi(x_0)$, $u_\varepsilon(x)>\varphi(x)$ for all $x\neq x_0$. Furthermore, if $D\varphi(x_0)\neq0$ or $x_0$ is an isolated critical point of $\varphi$, then we arrive at
$$
\limsup_{\stackrel{x\rightarrow x_0}{x\neq x_0}}\,(-\dive A(x,D\varphi(x)))\geq \varepsilon.
$$
\end{lemma}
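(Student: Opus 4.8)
The plan is to argue by contradiction, in the spirit of Juutinen--Lindqvist--Manfredi \cite{JLM01}. Suppose the conclusion fails, so that
$$
\limsup_{\stackrel{x\rightarrow x_0}{x\neq x_0}}\,(-\dive A(x,D\varphi(x)))<\varepsilon .
$$
In both admissible situations ($D\varphi(x_0)\neq0$, or $x_0$ an isolated critical point of $\varphi$) we have $D\varphi(x)\neq0$ for $x$ in some punctured ball $B_r(x_0)\setminus\{x_0\}$; since $a\in C^1(\Omega)$, $\varphi\in C^2(\Omega)$ and $\xi\mapsto|\xi|^{p-2}\xi$ is smooth away from the origin, the map $x\mapsto A(x,D\varphi(x))$ is $C^1$ on $B_r(x_0)\setminus\{x_0\}$, so $-\dive A(x,D\varphi(x))$ is continuous there and the $\limsup$ is meaningful. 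Hence, after shrinking $r$, there is $\delta>0$ with
$$
-\dive A(x,D\varphi(x))\leq\varepsilon-\delta\qquad\text{for all }x\in B_r(x_0)\setminus\{x_0\},
$$
i.e.\ $\varphi$ is a classical, hence distributional, subsolution of $-\dive A(x,Du)=\varepsilon-\delta$ in the punctured ball.

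The first genuine step is to upgrade this to the distributional subsolution inequality on the \emph{full} ball $B_r(x_0)$. When $D\varphi(x_0)\neq0$ this is immediate after shrinking $r$ so that $D\varphi\neq0$ throughout $B_r(x_0)$, where everything is classical. When $x_0$ is an isolated critical point, one removes the point by a capacity argument: $x\mapsto A(x,D\varphi(x))$ is continuous on $B_r(x_0)$ and vanishes at $x_0$ (because $A(x_0,D\varphi(x_0))=A(x_0,0)=0$), in particular it is locally bounded; testing the punctured-ball inequality against $\eta\theta_j$, with $\eta\in C^\infty_0(B_r(x_0))$, $\eta\geq0$, and $\theta_j$ a standard cut-off that vanishes on $B_{1/j}(x_0)$, equals $1$ outside $B_{2/j}(x_0)$, with $0\leq\theta_j\leq1$ and $|D\theta_j|\leq Cj$, and integrating by parts, the extra term is bounded by $C\|\eta\|_{L^\infty}\big(\sup_{B_{2/j}(x_0)}|A(\cdot,D\varphi)|\big)\,j\,|B_{2/j}(x_0)|=o(1)$ as $j\to\infty$, because $n\geq2$ and $A(\cdot,D\varphi)\to0$ at $x_0$. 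Letting $j\to\infty$ yields $\int_{B_r(x_0)}\langle A(x,D\varphi),D\eta\rangle\,dx\leq(\varepsilon-\delta)\int_{B_r(x_0)}\eta\,dx$ for all nonnegative $\eta\in C^\infty_0(B_r(x_0))$, and by density for all nonnegative $\eta\in W^{1,H(\cdot)}_0(B_r(x_0))$.

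Next I compare $\varphi$ with $u_\varepsilon$. Since $u_\varepsilon$ solves \eqref{2-5} with bounded right-hand side, the regularity theory for the double-phase operator gives $u_\varepsilon\in C(\Omega)$ (in fact locally H\"older continuous), so the hypotheses $u_\varepsilon(x_0)=\varphi(x_0)$ and $u_\varepsilon>\varphi$ on $B_r(x_0)\setminus\{x_0\}$ are pointwise meaningful and $m:=\min_{\partial B_\rho(x_0)}(u_\varepsilon-\varphi)>0$ for every $\rho\in(0,r]$. Fix such $\rho$ and fix $c\in(0,m)$. Adding a constant does not change the gradient, so $\varphi+c$ is a distributional subsolution of $-\dive A(x,Du)=\varepsilon-\delta$ in $B_\rho(x_0)$; moreover $\{\varphi+c>u_\varepsilon\}\subset\subset B_\rho(x_0)$, whence $(\varphi+c-u_\varepsilon)_+\in W^{1,H(\cdot)}_0(B_\rho(x_0))$, and for every nonnegative $\psi\in W^{1,H(\cdot)}_0(B_\rho(x_0))$ one has $\int\langle A(x,D(\varphi+c)),D\psi\rangle\,dx\leq(\varepsilon-\delta)\int\psi\,dx\leq\varepsilon\int\psi\,dx=\int\langle A(x,Du_\varepsilon),D\psi\rangle\,dx$. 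Proposition \ref{pro2-4}, applied on $B_\rho(x_0)$ with $u=\varphi+c$ and $v=u_\varepsilon$, gives $\varphi+c\leq u_\varepsilon$ a.e.\ in $B_\rho(x_0)$, hence everywhere by continuity; evaluating at $x_0$ yields $\varphi(x_0)+c\leq u_\varepsilon(x_0)=\varphi(x_0)$, i.e.\ $c\leq0$, a contradiction.

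The step demanding the most care is the removable-singularity argument at an isolated critical point, where one has to be sure the cut-off error really tends to $0$; this is exactly where the standing assumption $n\geq2$ together with the continuity (and vanishing at $x_0$) of $x\mapsto A(x,D\varphi(x))$ is used. Everything else is a routine application of the comparison principle for distributional solutions.
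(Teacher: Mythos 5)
Your proof is correct and follows essentially the same strategy as the paper: argue by contradiction, show $\varphi$ is a distributional subsolution on a full ball by a removable-singularity argument (the paper integrates by parts over an annulus $\rho<|x-x_0|<r$ and shows the surface term is $O(\rho^{n-1})$, whereas you multiply by a cutoff $\theta_j$ and show the error term is $O(j^{1-n})$ -- equivalent arguments, both using $n\geq 2$), then lift $\varphi$ by a constant $c<\min_{\partial B_\rho}(u_\varepsilon-\varphi)$ and contradict the comparison principle of Proposition \ref{pro2-4} at $x_0$. The splitting into the two cases $D\varphi(x_0)\neq 0$ and isolated critical point is a harmless presentational refinement; the paper treats both uniformly via the punctured ball.
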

\begin{proof}
We argue by contradiction. If the claim does not hold, then there exists a constant $r>0$, such that
$$
D\varphi(x)\neq0 \quad \text{and} \quad -\dive A(x,D\varphi(x))<\varepsilon
$$
for $0<|x-x_0|<r$.

Set $0<\rho<r$. For all nonnegative $\eta\in C^\infty_0(B(x_0,r))$, we get
\begin{align*}
-\int_{\rho<|x-x_0|<r}\eta\dive A(x,D\varphi)\,dx=&\int_{\rho<|x-x_0|<r}A(x,D\varphi)\cdot D\eta\,dx\\
&+\int_{|x-x_0|=\rho}\eta A(x,D\varphi)\cdot \frac{x-x_0}{\rho}\,dS.
\end{align*}
We now evaluate
\begin{equation*}
\begin{split}
&\quad \left|\int_{|x-x_0|=\rho}\eta A(x,D\varphi)\cdot \frac{x-x_0}{\rho}\,dS\right|\\
&\leq \|\eta\|_{L^\infty}\int_{|x-x_0|=\rho}|D\varphi|^{p-1}+a(x)|D\varphi|^{q-1}\,dS\\
&\leq C\|\eta\|_{L^\infty}(1+\|D\varphi\|^{q-1}_{L^\infty})\rho^{n-1}\\
&\rightarrow0 \quad \text{when } \rho\rightarrow0.
\end{split}
\end{equation*}
By the counter proposition,
$$
-\int_{\rho<|x-x_0|<r}\eta\dive A(x,D\varphi)\,dx<\varepsilon\int_{\rho<|x-x_0|<r}\eta\,dx\leq\int_{B(x_0,r)}\varepsilon\eta\,dx.
$$
Hence by sending $\rho\rightarrow0$ we know that
$$
\int_{B(x_0,r)}A(x,D\varphi)\cdot D\eta\,dx\leq\int_{B(x_0,r)}\varepsilon\eta\,dx,
$$
which means that $\varphi$ is a distributional subsolution.

Let $$m:=\inf_{x\in\partial B(x_0,r)}(u_\varepsilon-\varphi)>0.$$ Then $\widetilde{\varphi}:=\varphi+m$ is a distributional subsolution as well. Via $\widetilde{\varphi}\leq u_\varepsilon$ on $\partial B(x_0,r)$ and the comparison principle for distributional solutions, we have $\widetilde{\varphi}\leq u_\varepsilon$ in $B(x_0,r)$, but $\widetilde{\varphi}(x_0)>u_\varepsilon(x_0)$. That is a contradiction.
\end{proof}

\section{Distributional solutions coincide with $\mathcal{A}_{H(\cdot)}$-harmonic functions}
\label{sec-3}

In this section, we study the equivalence of distributional supersolutions and $\mathcal{A}_{H(\cdot)}$-superharmonic functions, by symmetry deriving the same relation between subsolutions and $\mathcal{A}_{H(\cdot)}$-subharmonic functions. Eventually, the equivalence can be extended to the distributional solutions and $\mathcal{A}_{H(\cdot)}$-harmonic functions.

We begin with stating that a distributional supersolution is $\mathcal{A}_{H(\cdot)}$-superharmonic after a redefinition in a set of measure zero. The proof is similar to that of $p(x)$-superharmonic functions in \cite[Theorem 6.1]{HHKLM07}. To be self-contained, we repeat the proof here.

\begin{theorem}
\label{thm3-1}
A distributional supersolution to \eqref{main}, $u\in W^{1,H(\cdot)}_{\rm loc}(\Omega)$, is lower semicontinuous (after a possible change in a set of measure zero). We could define
\begin{equation}
\label{3-0}
u(x)=ess\liminf_{y\rightarrow x} u(y)
\end{equation}
pointwise in $\Omega$. Thus $u$ is $\mathcal{A}_{H(\cdot)}$-superharmonic.
\end{theorem}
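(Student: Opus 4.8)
The plan is to follow the classical potential-theoretic argument, adapting the scheme used for $p$-harmonic and $p(x)$-harmonic functions in \cite{HHKLM07}. The three defining properties of an $\mathcal{A}_{H(\cdot)}$-superharmonic function must be verified in turn: lower semicontinuity, finiteness a.e., and the comparison property against continuous distributional solutions on subdomains.

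\textbf{Step 1: Lower semicontinuity and the representative \eqref{3-0}.} First I would show that a distributional supersolution $u$ satisfies the essential infimum limit bound $u(x) \ge \operatorname{ess\,liminf}_{y\to x} u(y)$ a.e., which lets us redefine $u$ by \eqref{3-0} on the exceptional null set. The key tool is a local boundedness / weak Harnack-type estimate for supersolutions of \eqref{main}: for $B(x_0,2r)\Subset\Omega$ one has an estimate controlling $\operatorname{ess\,inf}_{B(x_0,r)} u$ from below by an integral average of $u$ (after adding a constant to make $u$ nonnegative, using that supersolutions are invariant under adding constants). Such an estimate is available from the De Giorgi–Nash–Moser theory for the double-phase operator under \eqref{2-0} (this is part of the "well established theory" the authors invoke). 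Granting it, a standard covering/measure-theoretic argument shows that the function defined by \eqref{3-0} is lower semicontinuous and agrees with $u$ a.e.; properties (1) and (2) of Definition \ref{def2-2} follow, since $u\in W^{1,H(\cdot)}_{\rm loc}(\Omega)\subset L^1_{\rm loc}(\Omega)$ is finite a.e.

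\textbf{Step 2: The comparison property.} Fix $D\Subset\Omega$ and $h\in C(\overline D)$ a distributional solution of \eqref{main} with $u\ge h$ on $\partial D$. I want $u\ge h$ in $D$. The obstacle is that $u$ need not be continuous up to $\partial D$ and need not lie in $W^{1,H(\cdot)}(D)$ globally, so Proposition \ref{pro2-4} does not apply directly. The remedy is the usual approximation from below: for each $\varepsilon>0$ consider the open set $D_\varepsilon := \{x\in D : \operatorname{dist}(x,\partial D)>\varepsilon\}$ (or work with $h-\varepsilon$ and the set where $u < h-\varepsilon$). One shows that the open set $U_\varepsilon := \{x\in D : u(x) < h(x)-\varepsilon\}$ is compactly contained in $D$: by lower semicontinuity of $u$, continuity of $h$, and the boundary inequality $u\ge h$ on $\partial D$, the function $u-h$ is l.s.c. on $\overline D$ and nonnegative on $\partial D$, so $\{u-h\le -\varepsilon\}$ is a closed subset of the open set $D$, hence $U_\varepsilon\Subset D$. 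On $U_\varepsilon$ one then checks that $(h-\varepsilon-u)_+ \in W^{1,H(\cdot)}_0(U_\varepsilon)$: this requires knowing $u\in W^{1,H(\cdot)}(U_\varepsilon)$, which holds since $U_\varepsilon\Subset D\Subset\Omega$ and $u\in W^{1,H(\cdot)}_{\rm loc}(\Omega)$, and that the positive part of the difference vanishes on $\partial U_\varepsilon$, which follows because $u = h-\varepsilon$ there in the trace sense (again using l.s.c. of $u$ and continuity of $h$). Applying Proposition \ref{pro2-4} with $u$ a supersolution and $v=h-\varepsilon$ a solution on $U_\varepsilon$ gives $u\ge h-\varepsilon$ a.e. on $U_\varepsilon$, contradicting the definition of $U_\varepsilon$ unless $U_\varepsilon$ has measure zero; since $u$ is l.s.c. and $h$ continuous, $U_\varepsilon$ open with measure zero forces $U_\varepsilon=\emptyset$. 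Letting $\varepsilon\to0$ yields $u\ge h$ everywhere in $D$.

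\textbf{Main obstacle.} The genuinely substantial ingredient is the lower semicontinuity step, i.e. the weak Harnack / local boundedness estimate for distributional supersolutions of the double-phase equation under the sharp condition \eqref{2-0}; everything else is a careful but routine bookkeeping of Sobolev traces and the l.s.c. representative. I would either cite the double-phase regularity literature (Colombo–Mingione, Baroni–Colombo–Mingione) for this estimate or, if a self-contained argument is wanted, sketch the Moser iteration adapted to the $H(\cdot)$-modular using the Caccioppoli inequality of Proposition \ref{pro2-5} together with the Sobolev embedding Lemma. A secondary technical point to handle with care is verifying the boundary-trace membership $(h-\varepsilon-u)_+\in W^{1,H(\cdot)}_0(U_\varepsilon)$, where one typically argues via $\min\{u,h-\varepsilon\}$ and the fact that it coincides with $u$ near $\partial U_\varepsilon$.
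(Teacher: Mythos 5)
Your proposal takes essentially the same route as the paper's proof: take lower semicontinuity and a.e.\ finiteness as known from the regularity theory, then verify the comparison property by shrinking to a compactly contained set where the perturbed difference is compactly supported and applying Proposition~\ref{pro2-4}. The paper is even terser about Step~1 than you are — it simply ``observes'' that $u$ is lower semicontinuous, deferring entirely to the analogue in \cite{HHKLM07}, so your sketch of the weak-Harnack/Moser route is a reasonable fill-in.

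There is, however, a genuine slip in your Step~2. You write that ``since $u$ is l.s.c.\ and $h$ continuous, $U_\varepsilon$ open with measure zero forces $U_\varepsilon=\emptyset$.'' But $U_\varepsilon=\{u<h-\varepsilon\}=\{u-h<-\varepsilon\}$ is \emph{not} open when $u-h$ is merely lower semicontinuous: lower semicontinuity makes superlevel sets $\{u-h>c\}$ open and sublevel sets $\{u-h\le c\}$ closed, but says nothing about strict sublevel sets. (It is the set $\{u-h\le-\varepsilon\}$ that is closed, which is what gives you $U_\varepsilon\Subset D$, but that does not make $U_\varepsilon$ open.) The correct way to pass from the a.e.\ inequality to the everywhere inequality is exactly the one the paper uses: invoke the ess-liminf normalization \eqref{3-0}. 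If $u(x_0)<h(x_0)-\varepsilon$ at some point $x_0\in D$, then by \eqref{3-0} and the continuity of $h$, the set $\{u<h-\varepsilon\}$ has positive measure in every small ball around $x_0$, contradicting the a.e.\ conclusion from Proposition~\ref{pro2-4}. Replacing your openness claim with this argument repairs the proof and brings it exactly in line with the paper's last sentence (``Thus $u+\varepsilon\ge h$ everywhere in $D$ by virtue of \eqref{3-0}\dots'').

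One further minor point of bookkeeping: when you apply Proposition~\ref{pro2-4} on $U_\varepsilon$, you need $h-\varepsilon$ and $u$ to lie in $W^{1,H(\cdot)}_{\rm loc}(U_\varepsilon)$ and $(h-\varepsilon-u)_+\in W^{1,H(\cdot)}_0(U_\varepsilon)$. The latter is cleaner to justify if, as the paper does, you first fix a smooth $D'\Subset D$ with $\{u-h\le-\varepsilon\}\subset D'$ and work on $D'$, noting that $\min\{u+\varepsilon-h,0\}$ is compactly supported in $D'$ and lies in $W^{1,H(\cdot)}(D')$; this sidesteps having to discuss traces on the possibly irregular boundary of $U_\varepsilon$.
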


\begin{proof}
Observe that $u$ is lower semicontinuous and finite almost everywhere. In order to show that $u$ obeys the comparison principle in Definition \ref{def2-2}, we suppose that $D\subset\subset\Omega$ is a subdomain, and $h\in C(\overline{D})$ is a distributional solution in $D$ such that $u\geq h$ on $\partial D$. By the lower semicontinuity, we choose $\varepsilon>0$ and a subdomain $D'\subset\subset D$ such that $u+\varepsilon>h$ in $D\setminus D'$. Because the set $\{u-h\leq -\varepsilon\}$ is closed due to semicontinuity, the function $\min\{u+\varepsilon-h,0\}$ is compactly supported in $D'$. Via the Comparison Principle (Proposition \ref{pro2-4}), $u+\varepsilon\geq h$ a.e. in $D'$. Thus $u+\varepsilon\geq h$ everywhere in $D$ by virtue of \eqref{3-0}, and sending $\varepsilon\rightarrow0$ finishes the proof.
\end{proof}

Next, we are ready to show that $\mathcal{A}_{H(\cdot)}$-superharmonic functions are distributional supersolutions. To this end, consider an obstacle problem of the type
\begin{equation}
\label{3-1}
\mathcal{F}_\psi(\Omega):=\left\{u\in W^{1,H(\cdot)}(\Omega):u\geq \psi \quad \text{a.e. in } \Omega \quad \text{and} \quad u-\psi\in W^{1,H(\cdot)}_0(\Omega)\right\}
\end{equation}
with $\psi\in W^{1,H(\cdot)}(\Omega)$ being an obstacle from below. Also $\psi$ is the boundary value. If a function $u\in \mathcal{F}_\psi(\Omega)$ satisfies
$$
\int_\Omega\langle A(x,Du),D(v-u)\rangle\,dx\geq 0 \quad \text{for any }  v\in\mathcal{F}_\psi(\Omega),
$$
then $u$ is called a solution of \eqref{3-1}.
We can easily find that a solution to the obstacle problem \eqref{3-1} is a distributional supersolution to Eq. \eqref{main}.

We present the following results about existence and regularity properties of problem \eqref{3-1}. We refer the readers to \cite{CDeF20} for more details about the obstacle problem.

\begin{lemma} \label{lem3-2}

Under the assumption \eqref{2-0}, the following two conclusions hold:
\begin{enumerate}
  \item For the obstacle problem \eqref{3-1}, there is a unique solution $u_\psi\in\mathcal{F}_\psi(\Omega)$. Moreover, $u_\psi$ is a distributional supersolution of \eqref{main} in $\Omega$.
      \smallskip
  \item If $\psi\in C(\Omega)\cap W^{1,H(\cdot)}(\Omega)$, then $u_\psi$ is continuous in $\Omega$ and, in the distributional sense, solves Eq. \eqref{main} in the open set $\{x\in\Omega: u_\psi(x)>\psi(x)\}$.
\end{enumerate}
\end{lemma}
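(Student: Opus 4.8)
The plan is to handle the two parts separately: part (1) by the direct method of the calculus of variations (equivalently, monotone operator theory), and part (2) by a one‑parameter perturbation of the minimizer, once the interior continuity has been secured.

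\textbf{Part (1).} The set $\mathcal{F}_\psi(\Omega)$ is nonempty (it contains $\psi$), convex, and strongly closed in $W^{1,H(\cdot)}(\Omega)$, and $A(x,\xi)=D_\xi\big(\tfrac1p|\xi|^p+\tfrac{a(x)}q|\xi|^q\big)$. I would obtain $u_\psi$ as the minimizer of the energy $\mathcal{P}(\cdot,\Omega)$ over $\mathcal{F}_\psi(\Omega)$: by \eqref{2-1} together with $u-\psi\in W^{1,H(\cdot)}_0(\Omega)$ and the Poincar\'e inequality of Lemma \ref{lem2-1}, the functional is coercive on $\mathcal{F}_\psi(\Omega)$; being convex and continuous with respect to strong convergence it is sequentially weakly lower semicontinuous; and $W^{1,H(\cdot)}(\Omega)$ is reflexive, so a minimizer exists. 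Convexity of $\mathcal{F}_\psi(\Omega)$ allows the competitors $u_\psi+t(v-u_\psi)$, $t\in(0,1]$, for arbitrary $v\in\mathcal{F}_\psi(\Omega)$; differentiating $t\mapsto\mathcal{P}(u_\psi+t(v-u_\psi),\Omega)$ at $t=0^+$ gives the variational inequality $\int_\Omega\langle A(x,Du_\psi),D(v-u_\psi)\rangle\,dx\ge0$. Uniqueness follows from the strict monotonicity of $\xi\mapsto A(x,\xi)$: if $u_1,u_2$ both solve \eqref{3-1}, testing the inequality for $u_1$ with $v=u_2$ and that for $u_2$ with $v=u_1$ and adding yields $\int_\Omega\langle A(x,Du_1)-A(x,Du_2),D(u_1-u_2)\rangle\,dx\le0$, hence $Du_1=Du_2$ a.e., and since $u_1-u_2\in W^{1,H(\cdot)}_0(\Omega)$, Lemma \ref{lem2-1} forces $u_1=u_2$. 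Lastly, for any nonnegative $\eta\in W^{1,H(\cdot)}_0(\Omega)$ the competitor $v:=u_\psi+\eta$ lies in $\mathcal{F}_\psi(\Omega)$, so the variational inequality gives $\int_\Omega\langle A(x,Du_\psi),D\eta\rangle\,dx\ge0$; that is, $u_\psi$ is a distributional supersolution of \eqref{main}.

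\textbf{Part (2).} For the continuity of $u_\psi$ when $\psi\in C(\Omega)\cap W^{1,H(\cdot)}(\Omega)$ I would appeal to the regularity theory for double‑phase obstacle problems developed in \cite{CDeF20}; this is precisely the place where the balance condition \eqref{2-0}, $q/p\le1+\alpha/n$, enters, through local boundedness followed by De Giorgi--Nash--Moser-type oscillation estimates adapted to the non‑uniformly elliptic $(p,q)$-growth setting and to the continuous obstacle. Granting this, the set $U:=\{x\in\Omega:u_\psi(x)>\psi(x)\}$ is open. Given $\phi\in C_0^\infty(U)$, on the compact set $\operatorname{supp}\phi$ one has $u_\psi-\psi\ge\delta$ for some $\delta>0$, so for $|t|$ small enough $u_\psi+t\phi\ge\psi$ a.e.\ in $\Omega$, while $(u_\psi+t\phi)-\psi=(u_\psi-\psi)+t\phi\in W^{1,H(\cdot)}_0(\Omega)$; hence both $u_\psi+t\phi$ and $u_\psi-t\phi$ belong to $\mathcal{F}_\psi(\Omega)$. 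Inserting these into the variational inequality and dividing by $t>0$ gives $\int_\Omega\langle A(x,Du_\psi),D\phi\rangle\,dx\ge0$ and $\le0$ simultaneously, so the integral vanishes for every $\phi\in C_0^\infty(U)$, which by density is the statement that $u_\psi$ solves \eqref{main} in the distributional sense in $U$.

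The existence/uniqueness argument and the perturbation step in $U$ are routine; I expect the genuine obstacle to be the interior continuity of $u_\psi$ under \eqref{2-0}, which is not elementary and for which I would rely on the cited regularity results rather than reproduce them here.
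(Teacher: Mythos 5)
The paper does not prove Lemma~\ref{lem3-2} itself; it states the two facts and refers to \cite{CDeF20} for the obstacle-problem theory in the Musielak--Orlicz setting. Your reconstruction of part (1) --- direct method over the nonempty, convex, strongly (hence weakly) closed set $\mathcal{F}_\psi(\Omega)$, coercivity via Lemma~\ref{lem2-1} and \eqref{2-1}, weak lower semicontinuity from convexity, the variational inequality from the one-sided derivative in $t$, uniqueness by strict monotonicity, and the supersolution property by taking $v=u_\psi+\eta$ --- is the standard argument and is correct; note in particular that this part needs no density of smooth functions and so is insensitive to Lavrentiev-type issues. In part (2) you correctly isolate the interior continuity of $u_\psi$ (local boundedness plus a De~Giorgi-type oscillation estimate adapted to $(p,q)$-growth with a continuous obstacle) as the only non-elementary ingredient and, like the paper, delegate it to \cite{CDeF20}, the place where the balance condition $q/p\le 1+\alpha/n$ in \eqref{2-0} is genuinely used; your two-sided perturbation $u_\psi\pm t\phi$ on the open contact-free set $U$ is the usual way to upgrade the variational inequality to an equation there. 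In short, you supply a self-contained proof of the routine parts that the paper simply cites, and you match the paper's reliance on \cite{CDeF20} for the regularity step; I see no gap.
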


Now we utilize this lemma to derive the following approximation result on the $\mathcal{A}_{H(\cdot)}$-superharmonic functions.

\begin{lemma}
\label{lem3-3}
If $u$ is a $\mathcal{A}_{H(\cdot)}$-superharmonic function in $\Omega$, there is an increasing sequence of continuous supersolutions $\{u_j\}$ in domain $D$ satisfying
$$
u=\lim_{j\rightarrow\infty} u_j
$$
pointwise in $D$. Here $D\subset\subset\Omega$ is an arbitrary subdomain.
\end{lemma}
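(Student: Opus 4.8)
The plan is to construct the approximating sequence via solutions to obstacle problems with suitable continuous obstacles converging to $u$ from below. Since $u$ is lower semicontinuous on the compact set $\overline{D}$, there exists an increasing sequence of continuous functions $\psi_j\in C(\overline{D})$ (which we may also assume lie in $W^{1,H(\cdot)}(D)$, by mollification or by taking Lipschitz truncations after a standard regularization) such that $\psi_j\nearrow u$ pointwise in $\overline{D}$; near $\partial D$ one should moreover arrange $\psi_j\le u$ so that $\psi_j$ is an admissible lower obstacle together with its own boundary values. For each $j$, let $u_j:=u_{\psi_j}$ be the unique solution to the obstacle problem $\mathcal{F}_{\psi_j}(D)$ furnished by Lemma \ref{lem3-2}. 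By part (2) of that lemma, each $u_j$ is continuous in $D$, and since a solution of the obstacle problem is a distributional supersolution of \eqref{main}, we obtain the desired sequence of continuous supersolutions.

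The next step is to verify monotonicity of the $u_j$. Since $\psi_j\le\psi_{j+1}$, one checks that $\mathcal{F}_{\psi_{j+1}}(D)\subset\mathcal{F}_{\psi_j}(D)$ up to boundary-value adjustment is not quite what is needed; instead I would invoke the comparison principle for solutions to obstacle problems with ordered obstacles. Concretely, $u_{j+1}$ is a distributional supersolution with $u_{j+1}\ge\psi_{j+1}\ge\psi_j$, and $u_{j+1}\ge u_j$ on $\partial D$ (both equal their respective boundary data, with $\psi_j\le\psi_{j+1}$); testing the variational inequality for $u_j$ against $v:=\min\{u_{j+1},u_j\}\vee\psi_j$ (an admissible competitor, since it lies above $\psi_j$ and agrees with $u_j$ on $\partial D$) and combining with the supersolution property of $u_{j+1}$ via Proposition \ref{pro2-4} yields $u_j\le u_{j+1}$ a.e., hence everywhere by continuity. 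Thus $\{u_j\}$ is increasing and we may set $\tilde u:=\lim_{j\to\infty}u_j$, a lower semicontinuous function on $D$ (increasing limit of continuous functions) with $\tilde u\le u$ since each $u_j\le u$ (by comparison: $u$ is $\mathcal{A}_{H(\cdot)}$-superharmonic, $u_j$ is continuous, and on $\partial D$ we have $u_j=\psi_j\le u$, so applying Definition \ref{def2-2}(3) with roles reversed — more precisely, applying the comparison property available to $u$ — gives $u\ge u_j$ in $D$).

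The heart of the argument, and the step I expect to be the main obstacle, is the reverse inequality $\tilde u\ge u$ in $D$, i.e.\ that the obstacle solutions do not overshoot below $u$ in the limit. The standard mechanism is a "removing the obstacle" observation: on the contact set $\{u_j=\psi_j\}$ nothing forces $u_j$ up to $u$, so one must show this contact set shrinks appropriately. I would fix $x_0\in D$ and a small ball $B:=B(x_0,\rho)\subset\subset D$, and compare $u_j$ with the distributional solution $h_j\in C(\overline B)$ of \eqref{main} in $B$ with boundary values $u_j$ on $\partial B$; since $u_j$ is a supersolution, $u_j\ge h_j$ in $B$, and since $h_j$ has boundary data $u_j=\psi_j\nearrow u$ and \eqref{main} enjoys continuous dependence/stability of the Dirichlet problem on boundary data (available from the existence–uniqueness–regularity theory cited for \eqref{main}), $h_j\to h$ where $h$ solves \eqref{main} in $B$ with boundary values $u$ on $\partial B$. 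Then $\tilde u=\lim u_j\ge\lim h_j=h$ in $B$; letting $\rho\to0$ and using the lower semicontinuity of $u$ together with the fact that $h$ can be made to approximate $u(x_0)$ from below (choosing the boundary data carefully, or passing to a further sequence of balls and using that $u$ equals its ess liminf by \eqref{3-0}) forces $\tilde u(x_0)\ge u(x_0)$. The delicate point is ensuring the boundary-value convergence $\psi_j\to u$ on $\partial B$ holds in a strong enough sense for the solutions $h_j$ to converge — one typically circumvents this by working with the Poisson modification and a monotone limit argument rather than with a single ball, exploiting that the $h_j$ are themselves increasing. Combining $\tilde u\le u$ and $\tilde u\ge u$ gives $\tilde u=u$ pointwise in $D$, completing the proof.
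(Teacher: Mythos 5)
Your overall strategy---approximating $u$ from below by continuous obstacles $\psi_j\nearrow u$, taking $u_j$ to be the obstacle solutions, and checking $u_j\nearrow u$---is the same as the paper's. But you miss the observation that makes the proof essentially trivial, and as a result you invent a ``reverse inequality'' step that is both unnecessary and, as you yourself note, delicate to push through.

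The key fact you overlook is that $u_j\ge\psi_j$ \emph{everywhere} in $D$, by definition of the admissible class $\mathcal{F}_{\psi_j}$ in \eqref{3-1} (the obstacle solution lies above the obstacle). Combined with $u_j\le u$, this gives the sandwich $\psi_j\le u_j\le u$, and since $\psi_j\nearrow u$ pointwise the squeeze theorem immediately yields $u=\lim_j\psi_j\le\lim_j u_j\le u$. There is no need to prevent the $u_j$ from ``overshooting below $u$'' via a Poisson-modification argument with shrinking balls; the obstacle already provides the pointwise lower bound for free. Your proposed mechanism would also require stability of the Dirichlet problem under boundary data that converges only pointwise and monotonely on spheres, which is far from automatic.

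There is a second gap, in your argument that $u_j\le u$. You invoke Definition \ref{def2-2}(3) directly on $D$ with $h=u_j$. But that clause requires $h$ to be a distributional \emph{solution}, while $u_j$ is merely a distributional \emph{supersolution} on all of $D$. The paper handles this by passing to the open set $A_j:=\{x\in D:u_j(x)>\psi_j(x)\}$, on which $u_j$ \emph{is} a solution by Lemma \ref{lem3-2}(2); there $u_j=\psi_j$ on $\partial A_j$ (by continuity), hence $u_j\le u$ on $\partial A_j$, and the comparison property of $u$ applies. On the complementary contact set $\{u_j=\psi_j\}$ the inequality $u_j\le u$ is trivial since $\psi_j\le u$. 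The same device---restricting to the noncontact set where $u_j$ solves \eqref{main}---is what makes the monotonicity $u_j\le u_{j+1}$ follow cleanly from the comparison principle, in place of the ad hoc competitor you propose.
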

\begin{proof}
By virtue of the lower semicontinuity of $u$, we can obtain a sequence of smooth functions $\psi_j\in C^\infty(\Omega)$ such that
$$
\psi_1(x)\leq\psi_2(x)\leq\cdots\leq\psi_n(x)\leq\cdots \quad \text{and} \quad u(x)=\lim_{j\rightarrow\infty}\psi_j(x)
$$
everywhere in $\Omega$. Fix a regular subdomain $D\subset\subset\Omega$, and denote by $u_j:=u_{\psi_j}$ the solution of \eqref{3-1} in $D$ with $\psi_j$ as an obstacle. Hence $u_j\in\mathcal{F}_{\psi_j}(\Omega)$ and $u_j\geq\psi_j$ in $D$. We claim that
$$
u_1(x)\leq u_2(x)\leq\cdots\leq u_n(x)\leq\cdots \quad \text{and} \quad \psi_j(x)\leq u_j(x)\leq u(x)
$$
at every point $x\in D$. In order to show $u_j\leq u$, we first observe this is true but possibly in the open set $A_j:=\{x\in D:u_j(x)>\psi_j(x)\}$. Through Lemma \ref{lem3-2} $u_j$ is a distributional solution in $A_j$. Because $\psi_j$ and $u_j$ are continuous in $\overline{A_j}$ (the closure of $A_j$), we get $u_j=\psi_j$ on $\partial A_j$. By means of $u_j\leq u$ on $\partial A_j$ and the comparison principle obeyed by $u$ in Definition \ref{def2-2}, it follows that
$$
u_j\leq u \quad \text{in }   A_j.
$$
Thus $u_j\leq u$ in $D$. Analogously, we can justify
$$
u_j\leq u_{j+1}, \quad\quad j=1,2,3,\ldots,
$$
since $u_{j+1}$ obeys the comparison principle for distributional solutions by Lemma \ref{lem3-2}. Consequently, we deduce
$$
u=\lim_{j\rightarrow\infty}\psi_j\leq\lim_{j\rightarrow\infty}u_j\leq u
$$
everywhere in $D$.
\end{proof}

\begin{lemma}
\label{lem3-4}
Let $u$ be a $\mathcal{A}_{H(\cdot)}$-superharmonic function. If $u$ is locally bounded from above in $\Omega$, we can infer that $u\in W^{1,H(\cdot)}_{\rm loc}(\Omega)$ and $u_j$ satisfies
$$
\lim_{j\rightarrow\infty}\int_D H(x,Du-Du_j)\,dx=0,
$$
where $D\subset\subset\Omega$ is a subdomain and $u_j$ is defined as in Lemma \ref{lem3-3}.
\end{lemma}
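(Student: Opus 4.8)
The plan is to combine the approximation from Lemma \ref{lem3-3} with a uniform Caccioppoli estimate and the monotone convergence of the energies. First I would fix a subdomain $D\subset\subset\Omega$ and pick intermediate subdomains $D\subset\subset D''\subset\subset D'\subset\subset\Omega$ together with a cutoff $\zeta\in C^\infty_0(D')$ with $\zeta\equiv1$ on $D''$ and $0\le\zeta\le1$. Since $u$ is locally bounded above and $u_j\uparrow u$ with $u_1\le u_j\le u$, the sequence $\{u_j\}$ is uniformly bounded on $D'$ (the lower bound comes from $u_1$, which is continuous on $\overline{D'}$, hence bounded below there). Each $u_j$ is a distributional solution of \eqref{main} in the open set $A_j=\{u_j>\psi_j\}$, but to get a global Caccioppoli bound I would instead use that $u_j$ is a distributional supersolution in all of $D'$ together with the fact that it solves the obstacle problem: testing the obstacle inequality with an admissible competitor of the form $u_j-\zeta^q(u_j-u_1)$ (which lies above $\psi_j$ provided one is careful, or alternatively exploiting that $u_j$ solves the equation on $A_j$ and equals $\psi_j$ off $A_j$) yields
$$
\int_{D'}\zeta^q H(x,Du_j)\,dx\le C(p,q)\int_{D'}H\big(x,(u_j-u_1)\cdot D\zeta\big)\,dx+C\int_{D'}\zeta^q\big(H(x,Du_1)+1\big)\,dx,
$$
which is bounded uniformly in $j$ because the right-hand side involves only $u_1$, $\psi_j$ (controlled by $u$, $u_1$), and the fixed $L^\infty$ bound on $u_j-u_1$. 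This gives $\sup_j\int_{D''}H(x,Du_j)\,dx<\infty$, hence $\{Du_j\}$ is bounded in $L^{H(\cdot)}(D'')$; by reflexivity (up to a subsequence) $Du_j\rightharpoonup g$ weakly, while $u_j\to u$ in $L^1_{\rm loc}$ by monotone convergence, so $g=Du$ in the distributional sense and $u\in W^{1,H(\cdot)}_{\rm loc}(\Omega)$.

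Next I would upgrade weak convergence of the gradients to the strong modular convergence $\int_D H(x,Du-Du_j)\,dx\to0$. The standard device here is the monotonicity of the vector field $A(x,\cdot)$: one has the elementary inequality
$$
\langle A(x,\xi)-A(x,\eta),\xi-\eta\rangle\ge c(p,q)\,H(x,\xi-\eta)
$$
when $p\ge 2$ (and an analogous, slightly more delicate one when $1<p<2$, using Hölder to absorb the $(\,|\xi|+|\eta|\,)$ factors — the uniform energy bound makes these absorbable). So it suffices to show $\int_D\zeta^q\langle A(x,Du_j)-A(x,Du),Du_j-Du\rangle\,dx\to0$. I would expand this into four terms. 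The term $\int\zeta^q\langle A(x,Du_j),Du_j\rangle$ and the cross terms are handled by testing the equation/obstacle inequality for $u_j$ with $\zeta^q(u_j-u)$: because $u_j\le u$, the function $u_j-\zeta^q(u_j-u)\ge u_j$ is an admissible competitor for the obstacle problem (it still lies above $\psi_j$, and has the right boundary values since $\zeta^q(u_j-u)$ is compactly supported once we also know $u_j-u\to0$; more carefully one works on $A_j$ where $u_j$ solves the equation), yielding
$$
\int_{D'}\zeta^q\langle A(x,Du_j),Du_j-Du\rangle\,dx\le -q\int_{D'}\zeta^{q-1}(u_j-u)\langle A(x,Du_j),D\zeta\rangle\,dx,
$$
and the right-hand side tends to $0$ because $u_j-u\to0$ strongly in $L^{H(\cdot)}(D')$ while $\zeta^{q-1}A(x,Du_j)$ is bounded in the dual space. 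The remaining terms $\int\zeta^q\langle A(x,Du),Du_j-Du\rangle$ vanish by weak convergence $Du_j\rightharpoonup Du$ since $\zeta^q A(x,Du)\in (L^{H(\cdot)})^*$. Collecting, $\int_D\zeta^q\langle A(x,Du_j)-A(x,Du),Du_j-Du\rangle\,dx\to0$, hence $\int_D H(x,Du_j-Du)\,dx\to0$, and since this holds along every subsequence the full sequence converges.

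The main obstacle, I expect, is the bookkeeping around the obstacle problem rather than any deep estimate: one must justify that the competitors $u_j\mp\zeta^q(u_j-u)$ (or their analogues restricted to $A_j$) are genuinely admissible in $\mathcal{F}_{\psi_j}(D')$, i.e. that they remain $\ge\psi_j$ and have the correct zero boundary trace. The clean way around this is to argue on the open sets $A_j=\{u_j>\psi_j\}$, where Lemma \ref{lem3-2} gives that $u_j$ is an honest distributional solution of \eqref{main}, test there with $\zeta^q(u_j-u)$, and note the contribution from $D'\setminus A_j$ is controlled since there $Du_j=D\psi_j$ a.e. and $\psi_j\to u$ in $W^{1,H(\cdot)}(D')$ by construction. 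A secondary technical point is the $1<p<2$ case of the monotonicity inequality, where the quasi-linear estimate $H(x,\xi-\eta)\le C\big[\langle A(x,\xi)-A(x,\eta),\xi-\eta\rangle\big]^{?}\big[H(x,\xi)+H(x,\eta)\big]^{?}$ together with the uniform energy bound and Hölder's inequality in $L^{H(\cdot)}$ still forces modular convergence; this is routine but must be stated carefully.
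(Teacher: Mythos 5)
Your overall strategy (uniform Caccioppoli bound $\Rightarrow$ weak $L^{H(\cdot)}$-convergence of $Du_j\Rightarrow$ $u\in W^{1,H(\cdot)}_{\rm loc}$, then monotonicity of $A(x,\cdot)$ combined with a cutoff test to upgrade to strong modular convergence, with a split into $p\ge 2$ and $1<p<2$ cases) is exactly the paper's strategy, and the proof is essentially correct. The genuine difference is in how you justify the two key integral estimates. You route everything through the obstacle structure: admissible competitors of the form $u_j\mp\zeta^q(u_j-\cdot)$ in $\mathcal F_{\psi_j}$, or restriction to the open set $A_j=\{u_j>\psi_j\}$ where $u_j$ solves the equation. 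The paper instead uses only that $u_j$ is a distributional \emph{supersolution} on all of $D_1$: for the uniform energy bound it invokes the Caccioppoli estimate of Proposition \ref{pro2-5} for supersolutions with the constant $M=\sup_{D_1}u-\inf_{D_1}\psi_1+1$ (the supersolution version follows from testing with $\zeta^q(M-u_j)\ge 0$), and for the convergence step it simply uses that $\eta_j:=\zeta(u-u_j)\ge 0$ is an admissible nonnegative test function, so $\int\langle A(x,Du_j),D\eta_j\rangle\,dx\ge 0$, which immediately gives $I_j\le\int\langle A(x,Du),D(\zeta(u-u_j))\rangle\,dx$. This is cleaner because it sidesteps the competitor-admissibility bookkeeping entirely.

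That bookkeeping is precisely where your proposal has a real gap. In your Caccioppoli step the proposed competitor $u_j-\zeta^q(u_j-u_1)$ need \emph{not} lie above $\psi_j$: from $u_1\ge\psi_1$ and $u_j\ge\psi_j$ one only gets $u_j-\zeta^q(u_j-u_1)\ge\psi_1$, and since $\psi_j\ge\psi_1$ this is the wrong direction. You flag this yourself ("provided one is careful"), but the fix you sketch (working on $A_j$ and controlling $D'\setminus A_j$ via $Du_j=D\psi_j$ there) is not immediate either, since one then needs quantitative control of $\int_{D'\setminus A_j}H(x,D\psi_j)$ uniformly in $j$. The much simpler repair is to drop the obstacle structure here and use the supersolution Caccioppoli directly, which is what the paper does. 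In the strong-convergence step your competitor $u_j-\zeta^q(u_j-u)$ \emph{is} admissible (it lies above $u_j\ge\psi_j$ since $u\ge u_j$), so that part is fine, though again the supersolution test $\eta_j=\zeta(u-u_j)\ge 0$ gives the same inequality with less overhead. Finally, your closing remark about the $1<p<2$ case is right and is handled in the paper by the Hölder interpolation $\int H(x,Du-Du_j)\le C\varepsilon^{(p-2)/p}\int\langle A(x,Du)-A(x,Du_j),Du-Du_j\rangle+\varepsilon\int H(x,Du)$, letting $j\to\infty$ first and then $\varepsilon\to 0$.
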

\begin{proof}
Choose a regular domain $D_1$ such that $D\subset\subset D_1\subset\subset\Omega$. If $u$ is locally bounded from above, then it is bounded in $D_1$. Since $u_j$ ($j=1,2,3,\dots$) is distributional supersolutions ($u_j$ is defined as in Lemma \ref{lem3-3}), it follows from Proposition \ref{pro2-5} that
\begin{equation}
\label{3-2}
\int_D H(x,Du_j)\,dx\leq C(p,q)M^q\int_{D_1}H(x,D\zeta)\,dx=:L, \quad j=1,2,3,\dots,
\end{equation}
where $$M:=\sup_{D_1}u-\inf_{D_1}\psi_1+1,$$ which means that $Du_j$ ($j=1,2,3,\dots$) is uniformly bounded in $L^{H(\cdot)}(D)$. Thereby we know that $Du_j\rightharpoonup Du$ weakly in $L^{H(\cdot)}(D)$ up to a subsequence, and $u\in W^{1,H(\cdot)}(D)$ and $\int_D H(x,Du)\,dx\leq L$. We also deduce $u\in W^{1,H(\cdot)}_{\rm loc}(\Omega)$.

We now justify $$\lim_{j\rightarrow\infty}\int_D H(x,Du-Du_j)\,dx=0.$$  It suffices to show
$$
\lim_{j\rightarrow\infty}\int_{B_r} H(x,Du-Du_j)\,dx=0,
$$
whenever $B_r$ is a ball contained in $D$. In addition, we suppose $B_{2r}\subset\subset D$ is a concentric ball. Let $\zeta\in C^\infty_0(B_{2r}), 0\leq\zeta\leq1$ and $\zeta\equiv1$ in $B_r$. We use $\eta_j:=\zeta(u-u_j)$ as a test function to get
$$
\int_{B_{2r}}\langle A(x,Du_j),D\eta_j\rangle\,dx\geq0.
$$
Then we estimate
\begin{equation}
\label{3-3}
\begin{split}
I_j&:=\int_{B_{2r}}\langle A(x,Du)-A(x,Du_j),D(\zeta(u-u_j))\rangle\,dx\\
&\leq\int_{B_{2r}}\langle A(x,Du),D(\zeta(u-u_j))\rangle\,dx\\
&=\int_{B_{2r}}(u-u_j)\langle A(x,Du),D\zeta\rangle\,dx+\int_{B_{2r}}\zeta\langle A(x,Du),D(u-u_j)\rangle\,dx\\
&\rightarrow 0 \quad \text{as }  j\rightarrow\infty.
\end{split}
\end{equation}
In fact, from $u_j$ converging to $u$ monotonely and $Du_j\rightharpoonup Du$ weakly in $L^{H(\cdot)}(D)$, we can obtain the limit.

On the other hand,
\begin{equation}
\label{3-4}
\begin{split}
I_j&=\int_{B_{2r}}\zeta\langle A(x,Du)-A(x,Du_j),D(u-u_j)\rangle\,dx\\
&\quad+\int_{B_{2r}}(u-u_j)\langle A(x,Du)-A(x,Du_j),D\zeta\rangle\,dx\\
&=:I_{j,1}+I_{j,2}.
\end{split}
\end{equation}
First, for $I_{j,2}$ by H\"{o}lder inequality we arrive at
\begin{equation*}
\begin{split}
I_{j,2}&\leq\int_{B_{2r}}|D\zeta||u-u_j|(|Du|^{p-1}+a(x)|Du|^{q-1}+|Du_j|^{p-1}+a(x)|Du_j|^{q-1})\,dx\\
&\leq \|D\zeta\|_{L^\infty(B_{2r})}\Big[
\left(\int_{B_{2r}}|u-u_j|^p\,dx\right)^\frac{1}{p}\left(\int_{B_{2r}}|Du|^p\,dx\right)^\frac{p-1}{p}\\
&\quad+\left(\int_{B_{2r}}a(x)|u-u_j|^q\,dx\right)^\frac{1}{q}\left(\int_{B_{2r}}a(x)|Du|^q\,dx\right)^\frac{q-1}{q}\\
&\quad+\left(\int_{B_{2r}}|u-u_j|^p\,dx\right)^\frac{1}{p}\left(\int_{B_{2r}}|Du_j|^p\,dx\right)^\frac{p-1}{p}\\
&\quad+\left(\int_{B_{2r}}a(x)|u-u_j|^q\,dx\right)^\frac{1}{q}\left(\int_{B_{2r}}a(x)|Du_j|^q\,dx\right)^\frac{q-1}{q}\Big]\\
&\leq \|D\zeta\|_{L^\infty(B_{2r})}\left(1+\int_{B_{2r}}H(x,Du)\,dx+\int_{B_{2r}}H(x,Du_j)\,dx\right)\\
&\quad\cdot\max_{t\in\{p,q\}}\left(\int_{B_{2r}}H(x,u-u_j)\,dx\right)^\frac{1}{t}.
\end{split}
\end{equation*}
Here we note that the exponents $\frac{1}{p}, \frac{1}{q}, \frac{p-1}{p}$ and $\frac{q-1}{q}$ are less than 1. Utilizing Lebesgue dominated convergence theorem and \eqref{3-2} yields that
\begin{equation}
\label{3-5}
I_{j,2}\rightarrow 0 \quad \text{as } j\rightarrow\infty.
\end{equation}
Since $I_{j,1}\geq0$, combing \eqref{3-3},\eqref{3-4} and \eqref{3-5} we derive
$$
\lim_{j\rightarrow\infty}\int_{B_{2r}}\zeta\langle A(x,Du)-A(x,Du_j),D(u-u_j)\rangle\,dx=0.
$$
Furthermore,
$$
\lim_{j\rightarrow\infty}\int_{B_{r}}\langle A(x,Du)-A(x,Du_j),D(u-u_j)\rangle\,dx=0.
$$

In what follows, we divide the proof into three cases.

\medskip

\textbf{Case 1.} $2\leq p\leq q<\infty$. It is easy to arrive at
\begin{align*}
&\quad\int_{B_{r}}\langle A(x,Du)-A(x,Du_j),D(u-u_j)\rangle\,dx\\
&\geq C\int_{B_{r}} |Du-Du_j|^p+a(x)|Du-Du_j|^q\,dx\\
&=C\int_{B_{r}} H(x,Du-Du_j)\,dx\geq0.
\end{align*}
Thus we get by sending $j\rightarrow\infty$
$$
\int_{B_{r}} H(x,Du-Du_j)\,dx\rightarrow0.
$$

\textbf{Case 2.} $1<p\leq q<2$. For each $\varepsilon\in(0,1]$, we have
\begin{align*}
&\quad \int_{B_{r}}b(x)|Du-Du_j|^t\,dx\\
&\leq C(t)\varepsilon^\frac{t-2}{t}\int_{B_{r}} b(x)\langle|Du|^{t-2}Du-|Du_j|^{t-2}Du_j,Du-Du_j\rangle\,dx\\
&\quad+\varepsilon\int_{B_{r}}b(x)|Du|^t\,dx,
\end{align*}
where $t\in\{p,q\}$ and $b(x)\in\{1,a(x)\}$. Therefore,
\begin{align*}
&\quad \int_{B_{r}} H(x,Du-Du_j)\,dx \\
&\leq C(p,q)\varepsilon^\frac{p-2}{p}\int_{B_{r}}\langle A(x,Du)-A(x,Du_j),Du-Du_j\rangle\,dx\\
&\quad+\varepsilon\int_{B_{r}}H(x,Du)\,dx.
\end{align*}
As $j\rightarrow\infty$, the above inequality becomes
$$
\lim_{j\rightarrow\infty}\int_{B_{r}} H(x,Du-Du_j)\,dx\leq \varepsilon\int_{B_{r}}H(x,Du)\,dx.
$$
Finally, since $\varepsilon>0$ is arbitrary, we infer
$$
\lim_{j\rightarrow\infty}\int_{B_{r}} H(x,Du-Du_j)\,dx=0.
$$

\textbf{Case 3.} $1<p<2\leq q<\infty$. Merging Case 1 and Case 2, we can see
\begin{align*}
&\quad\int_{B_{r}} H(x,Du-Du_j)\,dx\\
&\leq C(p)\varepsilon^\frac{p-2}{p}\int_{B_{r}}\langle|Du|^{p-2}Du-|Du_j|^{p-2}Du_j,Du-Du_j\rangle\,dx+\varepsilon\int_{B_{r}}|Du|^p\,dx\\
&\quad+C\int_{B_{r}} a(x)\langle|Du|^{q-2}Du-|Du_j|^{q-2}Du_j,Du-Du_j\rangle\,dx\\
&\leq C\varepsilon^\frac{p-2}{p}\int_{B_{r}}\langle A(x,Du)-A(x,Du_j),Du-Du_j\rangle\,dx+\varepsilon\int_{B_{r}}|Du|^p\,dx.
\end{align*}
Similarly, we deduce
$$
\lim_{j\rightarrow\infty}\int_{B_{r}} H(x,Du-Du_j)\,dx=0.
$$

In summary, we reach the conclusion.
\end{proof}
Through the approximation theorem, we can easily establish the result that bounded $\mathcal{A}_{H(\cdot)}$-superharmonic functions are distributional supersolutions, which is stated as follows.
\begin{theorem}
\label{thm3-5}
Assume that $u$ is $\mathcal{A}_{H(\cdot)}$-superharmonic and locally bounded in $\Omega$. Then $u\in W^{1,H(\cdot)}_{\rm loc}(\Omega)$ and $u$ is a distributional supersolution to Eq. \eqref{main}, that is,
$$
\int_\Omega \langle A(x,Du),D\eta\rangle\,dx\geq0
$$
for any nonnegative $\eta\in C^\infty_0(\Omega)$.
\end{theorem}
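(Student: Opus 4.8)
The plan is to realize $u$ as the pointwise limit of the increasing sequence of \emph{continuous} supersolutions $u_j$ constructed in Lemma \ref{lem3-3}, and to pass to the limit in the weak inequality satisfied by each $u_j$; the gradient convergence needed for this passage is exactly what Lemma \ref{lem3-4} provides.

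First I would fix a nonnegative $\eta\in C^\infty_0(\Omega)$ and pick a regular subdomain $D$ with $\operatorname{supp}\eta\subset D\subset\subset\Omega$. By Lemma \ref{lem3-4} we already know $u\in W^{1,H(\cdot)}_{\rm loc}(\Omega)$, so only the variational inequality remains. Lemma \ref{lem3-3} gives continuous supersolutions $u_j$ in $D$ with $u_j\uparrow u$ pointwise, and since $\eta$ is a nonnegative admissible test function, $\int_D\langle A(x,Du_j),D\eta\rangle\,dx\ge0$ for every $j$. Lemma \ref{lem3-4} gives $\int_D H(x,Du-Du_j)\,dx\to0$, which by \eqref{2-1} is the same as $Du_j\to Du$ in $L^{H(\cdot)}(D;\mathbb{R}^n)$; in particular, along a subsequence $Du_j\to Du$ a.e. in $D$, hence $A(x,Du_j)\to A(x,Du)$ a.e. by continuity of $\xi\mapsto A(x,\xi)$.

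The remaining point is to justify $\int_D\langle A(x,Du_j),D\eta\rangle\,dx\to\int_D\langle A(x,Du),D\eta\rangle\,dx$, so that the sign survives the limit. From the Caccioppoli bound \eqref{3-2}, $\int_D|Du_j|^p\,dx$ and $\int_D a(x)|Du_j|^q\,dx$ are bounded uniformly in $j$; since $|A(x,Du_j)|\le|Du_j|^{p-1}+a(x)|Du_j|^{q-1}$, with $|Du_j|^{p-1}$ bounded in $L^{p/(p-1)}(D)$ and $a(x)|Du_j|^{q-1}$ bounded in $L^{q/(q-1)}(D)$ (here $a$ is bounded on $D$ and $|D|<\infty$), the family $\{\langle A(x,Du_j),D\eta\rangle\}_j$ is bounded in $L^r(D)$ for some $r>1$, hence equi-integrable. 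Combining equi-integrability with the a.e. convergence above, the Vitali convergence theorem yields the desired convergence of the integrals along the subsequence; since the limit does not depend on the subsequence, the full sequence converges, and letting $j\to\infty$ in $\int_D\langle A(x,Du_j),D\eta\rangle\,dx\ge0$ gives $\int_\Omega\langle A(x,Du),D\eta\rangle\,dx\ge0$, as $\operatorname{supp}\eta\subset D$.

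The main obstacle is the passage to the limit just described: Lemma \ref{lem3-4} only gives strong convergence of the \emph{gradients} $Du_j$ in the Musielak--Orlicz space, not of the nonlinear expressions $A(x,Du_j)$ themselves, so one must convert this into a.e. convergence along subsequences and supplement it with the equi-integrability coming from \eqref{3-2} before invoking Vitali's theorem; the subsequence bookkeeping is then needed to recover convergence of the whole sequence. Equivalently, one could observe that $(A(\cdot,Du_j))_j$ is bounded in the reflexive conjugate space $L^{H^*(\cdot)}(D)$, extract a weak limit, identify it with $A(\cdot,Du)$ using the a.e. convergence, and test against $D\eta\in L^{H(\cdot)}(D)$.
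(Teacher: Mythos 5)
Your proof is correct, but it takes a genuinely different route from the paper's. The paper also starts from the approximation $u_j\uparrow u$ of Lemmas \ref{lem3-3} and \ref{lem3-4}, but then passes to the limit \emph{quantitatively}: it invokes the elementary vector inequalities \eqref{3-6} to bound $|A(x,Du)-A(x,Du_j)|$ by expressions in $|Du-Du_j|$ and $|Du|,|Du_j|$, and then applies H\"older's inequality (treating the three regimes $2\le p\le q$, $1<p\le q<2$, $1<p<2\le q$ separately) to estimate $\big|\int_\Omega\langle A(x,Du)-A(x,Du_j),D\eta\rangle\,dx\big|$ directly by powers of $\int H(x,Du-Du_j)\,dx$ and the uniform energy bound \eqref{3-2}, which forces it to zero. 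You instead give a ``soft'' argument: strong $L^{H(\cdot)}$ convergence of the gradients gives $L^p$ convergence (since $H(x,\xi)\ge|\xi|^p$), hence a.e.\ convergence along a subsequence, hence $A(\cdot,Du_j)\to A(\cdot,Du)$ a.e.\ by continuity of $A(x,\cdot)$; you then supply equi-integrability from the Caccioppoli bound \eqref{3-2} (using $a\in L^\infty_{\rm loc}$ to get $L^{q/(q-1)}$ control, $q/(q-1)>1$) and conclude by Vitali's theorem, removing the subsequence afterward by uniqueness of the limit. Your route avoids the case-splitting and the vector inequalities, so it is shorter and more robust; the paper's route is entirely elementary (no measure-theoretic convergence theorems) and yields an explicit rate in terms of $\int H(x,Du-Du_j)\,dx$. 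Both are valid; just make sure to state explicitly that $D\eta$ has compact support in a set where $a$ is bounded, and that $|D|<\infty$, which you use implicitly when invoking boundedness in $L^r$, $r>1$, to get uniform integrability.
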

\begin{proof}
We need to verify
$$
\int_\Omega \langle A(x,Du),D\eta\rangle\,dx=\lim_{j\rightarrow\infty}\int_\Omega \langle A(x,Du_j),D\eta\rangle\,dx\geq0,
$$
where $u_j$ is as in Lemma \ref{lem3-3}. It is well known that $u_j$ is distributional supersolution. Now we pass to the limit. We shall employ the elementary vector inequalities:
\begin{equation}
\label{3-6}
||\xi_1|^{t-2}\xi_1-|\xi_2|^{t-2}\xi_2|=\begin{cases}(t-1)|\xi_1-\xi_2|(|\xi_1|^{t-2}+|\xi_2|^{t-2})\quad &\textmd{if }{t\geq2}, \\[2mm]
2^{2-t}|\xi_1-\xi_2|^{t-1}\quad &\textmd{if }{1<t<2},\end{cases}
\end{equation}
where $\xi_1,\xi_2\in \mathbb{R}^n$. We split the proof into three cases.

\medskip

\textbf{Case 1.} $2\leq p\leq q<\infty$. Using H\"{o}lder inequality and \eqref{3-6} we have
\begin{equation*}
\begin{split}
&\quad \int_\Omega\langle A(x,Du)-A(x,Du_j),D\eta\rangle\,dx\\
&\leq \int_\Omega|D\eta||(|Du|^{p-2}Du-|Du_j|^{p-2}Du_j)+a(x)(|Du|^{q-2}Du-|Du_j|^{q-2}Du_j)|\,dx\\
&\leq C\int_\Omega|D\eta||Du-Du_j|[(|Du|^{p-2}+|Du_j|^{p-2})+a(x)(|Du|^{q-2}+|Du_j|^{q-2})]\,dx\\
&\leq C\left(\int_\Omega|D\eta|^p\,dx\right)^\frac{1}{p}\left(\int_\Omega|Du-Du_j|^p\,dx\right)^\frac{1}{p}\left(\int_\Omega(|Du|^{p-2}+|Du_j|^{p-2})^\frac{p}{p-2}\,dx\right)^\frac{p-2}{p}\\
&+C\left(\int_\Omega a(x)|D\eta|^q\,dx\right)^\frac{1}{q}\left(\int_\Omega a(x)|Du-Du_j|^q\,dx\right)^\frac{1}{q}\\
&\quad \cdot\left(\int_\Omega a(x)(|Du|^{q-2}+|Du_j|^{q-2})^\frac{q}{q-2}\,dx\right)^\frac{q-2}{q}\\
&\leq C\left(1+\int_\Omega H(x,D\eta)\,dx\right)\left(1+\int_\Omega H(x,Du)\,dx+\int_\Omega H(x,Du_j)\,dx\right)\\
&\quad \cdot \max_{t\in\{p,q\}}\left(\int_\Omega H(x,Du-Du_j)\,dx\right)^\frac{1}{t}\\
&\rightarrow 0  \quad \text{as }   j\rightarrow\infty,
\end{split}
\end{equation*}
where the limit is inferred by Lemma \ref{lem3-4} and \eqref{3-2}.

\medskip

\textbf{Case 2.} $1<p\leq q<2$. By H\"{o}lder inequality and \eqref{3-6}, then
\begin{align*}
&\quad \int_\Omega|D\eta||(|Du|^{p-2}Du-|Du_j|^{p-2}Du_j)+a(x)(|Du|^{q-2}Du-|Du_j|^{q-2}Du_j)|\,dx\\
&\leq C\int_\Omega|D\eta|(|Du-Du_j|^{p-1}+a(x)|Du-Du_j|^{q-1})\,dx\\
&\leq C\left(\int_\Omega|D\eta|^p\,dx\right)^\frac{1}{p}\left(\int_\Omega|Du-Du_j|^p\,dx\right)^\frac{p-1}{p}\\
&\quad +C\left(\int_\Omega a(x)|D\eta|^q\,dx\right)^\frac{1}{q}\left(\int_\Omega a(x)|Du-Du_j|^q\,dx\right)^\frac{q-1}{q}\\
&\leq C\left(1+\int_\Omega H(x,D\eta)\,dx\right)\cdot \max_{t\in\{p,q\}}\left(\int_\Omega H(x,Du-Du_j)\,dx\right)^\frac{t-1}{t}\\
&\rightarrow 0  \quad \text{as }   j\rightarrow\infty \text{ by Lemma \ref{lem3-4}}.
\end{align*}

\medskip

\textbf{Case 3.} $1<p<2\leq q<\infty$. Merging Case 1 and Case 2 leads to
\begin{align*}
&\quad\int_\Omega|D\eta||(|Du|^{p-2}Du-|Du_j|^{p-2}Du_j)+a(x)(|Du|^{q-2}Du-|Du_j|^{q-2}Du_j)|\,dx\\
&\leq C\left(\int_\Omega|D\eta|^p\,dx\right)^\frac{1}{p}\left(\int_\Omega|Du-Du_j|^p\,dx\right)^\frac{p-1}{p}\\
&\quad+C\left(\int_\Omega a(x)|D\eta|^q\,dx\right)^\frac{1}{q}\left(\int_\Omega a(x)|Du-Du_j|^q\,dx\right)^\frac{1}{q}\\
&\qquad\cdot\left(\int_\Omega a(x)(|Du|^{q-2}+|Du_j|^{q-2})^\frac{q}{q-2}\,dx\right)^\frac{q-2}{q}\\
&\leq C\left(1+\int_\Omega H(x,D\eta)\,dx\right)\left(1+\int_\Omega H(x,Du)\,dx+\int_\Omega H(x,Du_j)\,dx\right)\\
&\qquad \cdot \max_{s\in\{\frac{p-1}{p},\frac{1}{q}\}}\left(\int_\Omega H(x,Du-Du_j)\,dx\right)^s\\
&\rightarrow 0 \quad \text{as } j\rightarrow\infty.
\end{align*}

Therefore, we have
$$
\lim_{j\rightarrow\infty}\int_\Omega\langle A(x,Du)-A(x,Du_j),D\eta\rangle\,dx=0.
$$
Now we finish the proof.
\end{proof}

The combination of Theorems \ref{thm3-1} and \ref{thm3-5} leads to the following conclusion serving as a bridge in the proof of equivalence of viscosity and distributional solutions. It is worth mentioning that the result on the equivalence between distributional solutions and $\mathcal{A}_{H(\cdot)}$-harmonic functions is of independent interest.

\begin{corollary}
\label{cor3-7}
With the condition \eqref{2-0}, a (locally) bounded distributional solution is the same as a (locally) bounded $\mathcal{A}_{H(\cdot)}$-harmonic function in $\Omega$.
\end{corollary}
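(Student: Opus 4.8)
The plan is to obtain Corollary~\ref{cor3-7} as a direct assembly of Theorem~\ref{thm3-1} and Theorem~\ref{thm3-5}, treating the supersolution and the subsolution parts separately and passing between them via the reflection $u\mapsto -u$, which interchanges the notions ``super/superharmonic'' and ``sub/subharmonic'' while preserving (local) boundedness.

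First I would show that a locally bounded distributional solution $u$ is a locally bounded $\mathcal{A}_{H(\cdot)}$-harmonic function. Since $u\in W^{1,H(\cdot)}_{\rm loc}(\Omega)$ is in particular a distributional supersolution, Theorem~\ref{thm3-1}, together with the pointwise redefinition \eqref{3-0}, makes $u$ lower semicontinuous and $\mathcal{A}_{H(\cdot)}$-superharmonic. Because $u$ is also a distributional subsolution, $-u$ is a distributional supersolution, and applying Theorem~\ref{thm3-1} to $-u$ yields that $u$ is $\mathcal{A}_{H(\cdot)}$-subharmonic after the symmetric redefinition through the essential upper limit. The point requiring attention is that these two redefinitions must yield the same function; this is guaranteed by the regularity theory for \eqref{main} recalled in the Introduction, according to which a distributional solution possesses a continuous representative, and for a continuous function the essential lower and upper limits at every point agree with its value. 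Consequently, after passing to this continuous representative, $u$ is at the same time $\mathcal{A}_{H(\cdot)}$-superharmonic and $\mathcal{A}_{H(\cdot)}$-subharmonic, i.e. $\mathcal{A}_{H(\cdot)}$-harmonic, and it is (locally) bounded by hypothesis.

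Conversely, let $u$ be a locally bounded $\mathcal{A}_{H(\cdot)}$-harmonic function. Regarded as an $\mathcal{A}_{H(\cdot)}$-superharmonic function that is locally bounded, Theorem~\ref{thm3-5} gives $u\in W^{1,H(\cdot)}_{\rm loc}(\Omega)$ together with
$$
\int_\Omega\langle A(x,Du),D\eta\rangle\,dx\geq0
$$
for every nonnegative $\eta\in C^\infty_0(\Omega)$; a routine truncation-and-approximation argument, based on the fact that $W^{1,H(\cdot)}_0(\Omega)$ is by definition the $W^{1,H(\cdot)}$-closure of $C^\infty_0(\Omega)$, upgrades this to all nonnegative $\eta\in W^{1,H(\cdot)}_0(\Omega)$, so that $u$ is a distributional supersolution. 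Since $u$ is also $\mathcal{A}_{H(\cdot)}$-subharmonic, the function $-u$ is $\mathcal{A}_{H(\cdot)}$-superharmonic and locally bounded, and the same reasoning applied to $-u$ shows that $u$ is a distributional subsolution. Hence $u$ is a (locally bounded) distributional solution belonging to $W^{1,H(\cdot)}_{\rm loc}(\Omega)$, and the ``bounded'' version of the statement follows verbatim. Note that here the boundedness hypothesis is genuinely used, since Theorem~\ref{thm3-5} requires it.

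Nearly everything here is already contained in Theorems~\ref{thm3-1} and~\ref{thm3-5}, so I do not expect a serious obstacle. The one genuine subtlety is the bookkeeping of pointwise representatives in the first direction: Theorem~\ref{thm3-1} only identifies the $\mathcal{A}_{H(\cdot)}$-superharmonic (respectively subharmonic) representative up to a set of measure zero, and it is the known continuity of distributional solutions to \eqref{main} that forces the essential-liminf and essential-limsup redefinitions to coincide and thereby to define a single $\mathcal{A}_{H(\cdot)}$-harmonic function. The remaining details — the density/truncation step and the $u\mapsto -u$ symmetry — are routine.
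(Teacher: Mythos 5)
Your argument is correct and follows exactly the route the paper intends: Corollary~\ref{cor3-7} is obtained by combining Theorem~\ref{thm3-1} (distributional supersolution $\Rightarrow$ $\mathcal{A}_{H(\cdot)}$-superharmonic after the ess-liminf redefinition) with Theorem~\ref{thm3-5} (locally bounded $\mathcal{A}_{H(\cdot)}$-superharmonic $\Rightarrow$ distributional supersolution), applying each also to $-u$ to handle the subsolution/subharmonic half. Your two added remarks — that the continuity of distributional solutions forces the ess-liminf and ess-limsup representatives to coincide, and that density of $C^\infty_0(\Omega)$ in $W^{1,H(\cdot)}_0(\Omega)$ upgrades the class of admissible test functions — are precisely the minor bookkeeping the paper leaves unsaid, and they are handled correctly.
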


\section{Equivalence between viscosity solutions and $\mathcal{A}_{H(\cdot)}$-harmonic functions}
\label{sec-4}

To begin with, we shall verify the claim that $\mathcal{A}_{H(\cdot)}$-superharmonic functions are viscosity supersolutions, which can be obtained by the comparison principle for distributional subsolutions and supersolutions (see Proposition \ref{pro2-4}).

\begin{theorem}
\label{thm4-1}
Under the assumption  that $a(x)\in C^1(\Omega)$, the $\mathcal{A}_{H(\cdot)}$-superharmonic functions are the viscosity supersolutions  to \eqref{main}.
\end{theorem}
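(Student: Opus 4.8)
The plan is to argue by contradiction, reducing the statement to the comparison principle of Definition \ref{def2-2} together with the comparison principle for distributional sub/supersolutions in Proposition \ref{pro2-4}. Suppose $u$ is $\mathcal{A}_{H(\cdot)}$-superharmonic in $\Omega$ but \emph{not} a viscosity supersolution of \eqref{main}. Then there exist $x_0\in\Omega$ and $\varphi\in C^2(\Omega)$ satisfying \eqref{2-3} (in particular $D\varphi(x_0)\neq 0$) such that $-\dive A(x_0,D\varphi(x_0))<0$.

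The first step is to upgrade $\varphi$ to a genuine distributional subsolution on a small ball. Since $a\in C^1(\Omega)$ and $\varphi\in C^2(\Omega)$, the vector field $x\mapsto A(x,D\varphi(x))$ is continuously differentiable on the set where $D\varphi\neq 0$, hence $x\mapsto -\dive A(x,D\varphi(x))$ is continuous near $x_0$. Thus there is $r>0$ with $\overline{B(x_0,r)}\subset\Omega$ on which $D\varphi\neq 0$ and $-\dive A(x,D\varphi(x))<0$. Integrating by parts against a nonnegative $\eta\in C^\infty_0(B(x_0,r))$ and invoking the density of $C^\infty_0$ in $W^{1,H(\cdot)}_0$, we obtain $\int_{B(x_0,r)}\langle A(x,D\varphi),D\eta\rangle\,dx\leq 0$ for every nonnegative $\eta\in W^{1,H(\cdot)}_0(B(x_0,r))$; the same holds with $\varphi$ replaced by $\varphi+c$ for any constant $c$. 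Note also that $\varphi+c\in W^{1,H(\cdot)}(B(x_0,r))\cap C(\overline{B(x_0,r)})$, since $\varphi$ is $C^2$ and $a$ is bounded on the closed ball.

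Next, put $m:=\min_{\partial B(x_0,r)}(u-\varphi)$, which is strictly positive because $u$ is lower semicontinuous on the compact set $\partial B(x_0,r)$ and $u>\varphi$ there, and set $\widetilde\varphi:=\varphi+m$, a distributional subsolution in $B(x_0,r)$ by the previous step. Let $h$ be the distributional solution of \eqref{main} in $B(x_0,r)$ with $h-\widetilde\varphi\in W^{1,H(\cdot)}_0(B(x_0,r))$; by the standard existence and regularity theory for the double-phase equation on balls (see, e.g., \cite{CDeF20,CM15}) such an $h$ exists, is unique, and belongs to $C(\overline{B(x_0,r)})$. Applying Proposition \ref{pro2-4} with the subsolution $\widetilde\varphi$ and the solution $h$ (for which $(\widetilde\varphi-h)_+\in W^{1,H(\cdot)}_0(B(x_0,r))$) gives $\widetilde\varphi\leq h$ in $B(x_0,r)$. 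On the other hand, $u\geq\widetilde\varphi=h$ on $\partial B(x_0,r)$, so the comparison principle in Definition \ref{def2-2} yields $u\geq h$ in $B(x_0,r)$. Combining the two inequalities, $\widetilde\varphi\leq h\leq u$ throughout $B(x_0,r)$; evaluating at $x_0$ gives $u(x_0)+m=\varphi(x_0)+m=\widetilde\varphi(x_0)\leq u(x_0)$, which is impossible since $m>0$. Hence $u$ is a viscosity supersolution, and by symmetry the corresponding statement holds for $\mathcal{A}_{H(\cdot)}$-subharmonic functions and viscosity subsolutions.

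The only delicate points are the continuity of $x\mapsto -\dive A(x,D\varphi(x))$ near $x_0$ — which is exactly where the hypotheses $a\in C^1(\Omega)$, $\varphi\in C^2(\Omega)$ and $D\varphi(x_0)\neq 0$ are used (the last one keeps us away from the degeneracy/singularity of $A$ and dispenses with any discussion of critical points) — and the solvability of the Dirichlet problem for $h$ with continuity up to $\partial B(x_0,r)$, which follows from the by-now-standard theory for double-phase equations on regular domains. Everything else is a routine application of the two comparison principles already established.
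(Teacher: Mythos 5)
Your proof is correct and follows essentially the same strategy as the paper: argue by contradiction, use continuity of $x\mapsto-\dive A(x,D\varphi(x))$ near $x_0$ (enabled by $a\in C^1$, $\varphi\in C^2$, and $D\varphi(x_0)\neq 0$) to verify that $\varphi$ is a distributional subsolution on a small ball, shift it up by the boundary gap $m>0$, and derive a contradiction via comparison at $x_0$. The one place where you are more careful than the paper is the final comparison step. The comparison principle in Definition \ref{def2-2} only allows comparing an $\mathcal{A}_{H(\cdot)}$-superharmonic function against continuous distributional \emph{solutions}, not subsolutions, so one cannot conclude $\widetilde\varphi\leq u$ directly. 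You correctly bridge this by introducing the Dirichlet solution $h$ with boundary value $\widetilde\varphi$, obtaining $\widetilde\varphi\leq h$ from Proposition \ref{pro2-4} (comparison for distributional sub/super-solutions) and $h\leq u$ from Definition \ref{def2-2}, and then chaining these. The paper elides this intermediate step, invoking ``the comparison principle'' to compare the subsolution $\widetilde\phi$ directly with the superharmonic $u$; your version supplies the missing construction, and the solvability and boundary continuity of $h$ on a ball with $C^2$ boundary data is, as you note, within the scope of the existence and regularity theory the paper already relies on.
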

\begin{proof}
Let $u$ be $\mathcal{A}_{H(\cdot)}$-superharmonic in the domain $\Omega$. If the conclusion does not hold, then we suppose that there exists a test function $\phi\in C^2(\Omega)$ such that for $x_0\in\Omega$,
\begin{equation*}
\begin{cases}
u(x_0)=\phi(x_0)\\
D\phi(x_0)\neq0\\
u(x)<\phi(x) \quad\textmd{for }  x\neq x_0
\end{cases}
\end{equation*}
and it satisfies
$$
-\dive A(x_0,D\phi(x_0))<0.
$$
Owing to continuity, for $x\in B(x_0,\delta)$ with some small $\delta>0$,
$$
D\phi(x)\neq0 \quad \text{and} \quad -\dive A(x,D\phi(x))<0.
$$

Set
$$
m:=\frac{1}{2}\min_{x\in \partial B(x_0,\delta)}\{u(x)-\phi(x)\}>0
$$
and
$$
\widetilde{\phi}:=\phi+m.
$$
Then we can see that $\widetilde{\phi}$ is a distributional subsolution and $\widetilde{\phi}\leq u$ on $\partial B(x_0,\delta)$. It follows from the comparison principle that $\phi(x)+m\leq u(x)$ in $B(x_0,\delta)$,  which contradicts $\phi(x_0)=u(x_0)$.
\end{proof}

Next, we present an essential approximation lemma, which states that the distributional solution to \eqref{main} could be approximated by the solution of \eqref{2-5}.

\begin{lemma}
\label{lem4-2}
Let $u\in W^{1,H(\cdot)}(\Omega)$ be a distributional solution of \eqref{main} and $u_\varepsilon$ a distributional solution to problem \eqref{2-5} with the Dirichlet boundary value $u-u_\varepsilon\in W^{1,H(\cdot)}_0(\Omega)$ with $\varepsilon>0$. Then
$$
u_\varepsilon\rightarrow u \quad \text{locally uniformly in } \Omega,
$$
under the assumption that $$\frac{q}{p}\leq \min\left\{p,1+\frac{\alpha}{n}\right\}.$$
\end{lemma}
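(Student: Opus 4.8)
The plan is to combine three ingredients: a one-sided comparison that makes the family $\{u_\varepsilon\}$ monotone in $\varepsilon$; an energy estimate forcing $u_\varepsilon\to u$ strongly in $W^{1,H(\cdot)}(\Omega)$; and the (uniform-in-$\varepsilon$) local regularity for double-phase equations with bounded right-hand side, which together with the Arzel\`a--Ascoli theorem upgrades convergence in norm to local uniform convergence. The hypothesis $q/p\le\min\{p,1+\frac{\alpha}{n}\}$ will be used only in this last, regularity, step.

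First I would record the comparison facts. Since $u-u_\varepsilon\in W^{1,H(\cdot)}_0(\Omega)$ and, for every nonnegative $\varphi\in W^{1,H(\cdot)}_0(\Omega)$,
$$
\int_\Omega\langle A(x,Du),D\varphi\rangle\,dx=0\le\varepsilon\int_\Omega\varphi\,dx=\int_\Omega\langle A(x,Du_\varepsilon),D\varphi\rangle\,dx,
$$
Proposition \ref{pro2-4} yields $u\le u_\varepsilon$ a.e. in $\Omega$; the same argument applied to $u_{\varepsilon_1},u_{\varepsilon_2}$ with $0<\varepsilon_1<\varepsilon_2$ gives $u_{\varepsilon_1}\le u_{\varepsilon_2}$ a.e., so $\varepsilon\mapsto u_\varepsilon$ is nondecreasing. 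Hence along any sequence $\varepsilon_k\downarrow0$ the functions $u_{\varepsilon_k}$ decrease a.e. to some limit $w\ge u$.

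Next, the energy estimate. Testing the weak formulations of \eqref{2-5} and \eqref{main} with $\eta:=u_\varepsilon-u\in W^{1,H(\cdot)}_0(\Omega)$ and subtracting gives
$$
\int_\Omega\langle A(x,Du_\varepsilon)-A(x,Du),Du_\varepsilon-Du\rangle\,dx=\varepsilon\int_\Omega(u_\varepsilon-u)\,dx .
$$
Using first this identity with $\int_\Omega\langle A(x,Du_\varepsilon),Du_\varepsilon\rangle\,dx=\varrho_H(Du_\varepsilon)$ on the left, together with Young's inequality, the embedding $W^{1,H(\cdot)}_0(\Omega)\hookrightarrow L^p(\Omega)$, the Poincar\'{e} inequality (Lemma \ref{lem2-1}) and \eqref{2-1}, one absorbs terms and obtains, for $\varepsilon\le1$, a bound on $\varrho_H(Du_\varepsilon)$ that is uniform in $\varepsilon$. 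The right-hand side of the displayed identity is then controlled by
$$
\varepsilon\int_\Omega|u_\varepsilon-u|\,dx\le C\varepsilon\,\|D(u_\varepsilon-u)\|_{L^{H(\cdot)}(\Omega)}\le C'\varepsilon\longrightarrow0 .
$$
Estimating the left-hand side from below by the elementary monotonicity inequalities, split as in the proof of Lemma \ref{lem3-4} into the ranges $2\le p\le q$, $1<p\le q<2$ and $1<p<2\le q$ (and, in the singular ranges, absorbing the already-bounded quantity $\varrho_H(Du_\varepsilon)+\varrho_H(Du)$), one arrives at $\varrho_H(Du_\varepsilon-Du)\to0$, i.e. $\|u_\varepsilon-u\|_{W^{1,H(\cdot)}(\Omega)}\to0$. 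In particular $u_\varepsilon\to u$ in $L^1(\Omega)$, so the monotone limit above satisfies $w=u$ a.e.

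Finally, regularity. Under \eqref{2-0} and $q/p\le\min\{p,1+\frac{\alpha}{n}\}$, the distributional solutions of \eqref{main} and --- since the source $\varepsilon\le1$ is bounded --- of \eqref{2-5} possess $C^{1,\beta}_{\rm loc}(\Omega)$ representatives, with local estimates depending only on $n,p,q,\alpha$, the H\"older seminorm of $a$, the sup-norm of the source and the $W^{1,H(\cdot)}$-norm of the solution; by the previous step these data are uniform in $\varepsilon$, so $\{u_\varepsilon\}$ is locally equibounded and equicontinuous in $\Omega$. By Arzel\`a--Ascoli, every subsequence of any $\{u_{\varepsilon_k}\}$ has a further subsequence converging locally uniformly to some $\widetilde u\in C(\Omega)$, and the $L^1$-convergence forces $\widetilde u=u$; since this limit is independent of the subsequence, $u_\varepsilon\to u$ locally uniformly in $\Omega$. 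Alternatively, once $u$ is known continuous, Dini's theorem applied on compact subdomains to the monotone family $u_\varepsilon\downarrow u$ of continuous functions gives the conclusion at once. I expect the main obstacle to be precisely this regularity input: one needs local boundedness and $C^{0,\gamma}$ (or $C^{1,\beta}$) estimates for the double-phase equation \emph{with bounded right-hand side} and with constants \emph{independent of $\varepsilon$}, which is where the restriction $q/p\le\min\{p,1+\frac{\alpha}{n}\}$ enters; the energy estimate is otherwise routine, apart from some care in the singular range $p<2$ about which power of $\varrho_H(Du_\varepsilon-Du)$ is actually being controlled.
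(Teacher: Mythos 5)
Your proposal follows the same strategy as the paper's proof: a uniform-in-$\varepsilon$ energy bound obtained by testing with $u-u_\varepsilon$, strong $W^{1,H(\cdot)}$-convergence of $u_\varepsilon$ to $u$ via the elementary monotonicity inequalities split over the three ranges of $p,q$, monotone ordering of the family $u_\varepsilon$ from the comparison principle (Proposition \ref{pro2-4}), and finally uniform local H\"older estimates together with Arzel\`a--Ascoli (or, equivalently, Dini once equicontinuity is in hand) to upgrade to local uniform convergence. The one substantive difference lies in the energy bound. The paper inflates $\bigl(\int_\Omega|u-u_\varepsilon|^p\,dx\bigr)^{1/p}$ to $\bigl(\int_\Omega H(x,u-u_\varepsilon)\,dx\bigr)^{1/p}$ and then uses \eqref{2-1}, which produces a $q/p$-power of the gradient norm that can be absorbed only under $q/p\le p$; your chain $\|u-u_\varepsilon\|_{L^p(\Omega)}\le C\|u-u_\varepsilon\|_{L^{H(\cdot)}(\Omega)}\le C\|D(u-u_\varepsilon)\|_{L^{H(\cdot)}(\Omega)}$, using the embedding $L^{H(\cdot)}\hookrightarrow L^p$ rather than the modular, produces only a $1/p$-power of $\varrho_H(Du_\varepsilon)$ and thus absorbs with no constraint beyond $p>1$. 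So your remark that $q/p\le\min\{p,1+\alpha/n\}$ enters only via the regularity estimate is a sharper bookkeeping than the paper's own execution, though you should be explicit about which part of that hypothesis the cited $C^{1,\beta}_{\rm loc}$ theory actually uses (it is $q/p\le 1+\alpha/n$, already contained in \eqref{2-0}). One small caveat on the Dini alternative: to apply Dini you need $u_\varepsilon(x)\downarrow u(x)$ at every point, not merely a.e.; this does follow, but only because the equicontinuity of $\{u_\varepsilon\}$ forces the pointwise decreasing limit $w$ to be continuous and hence to coincide with the continuous representative of $u$ everywhere, so this route does not actually bypass the regularity input.
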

\begin{proof}
\textbf{Step 1.} We first obtain the boundedness of $|Du-Du_\varepsilon|$ in $L^{H(\cdot)}(\Omega)$. We utilize $u-u_\varepsilon$ to test the weak formulation for $u_\varepsilon$, which yields that
$$
\int_\Omega\langle A(x,Du_\varepsilon),D(u-u_\varepsilon)\rangle\,dx=\varepsilon\int_\Omega (u-u_\varepsilon)\,dx.
$$
Furthermore, making use of H\"{o}lder inequality, we have
\begin{align*}
&\quad\int_\Omega H(x,Du_\varepsilon)\,dx\\
&=\int_\Omega\langle A(x,Du_\varepsilon),Du\rangle\,dx-\varepsilon\int_\Omega (u-u_\varepsilon)\,dx\\
&\leq \epsilon\int_\Omega H(x,Du_\varepsilon)\,dx+C(\epsilon)\int_\Omega H(x,Du)\,dx+|\Omega|^\frac{p-1}{p}\varepsilon\left(\int_\Omega |u-u_\varepsilon|^p\,dx\right)^\frac{1}{p}.
\end{align*}
Choosing $\epsilon=\frac{1}{2}$ and employing \eqref{2-1} and \eqref{2-2}, we find
\begin{align*}
&\quad \int_\Omega H(x,Du_\varepsilon)\,dx\\
&\leq C\int_\Omega H(x,Du)\,dx+|\Omega|^\frac{p-1}{p}\varepsilon\left(\int_\Omega H(x,u-u_\varepsilon)\,dx\right)^\frac{1}{p}\\
&\leq C\int_\Omega H(x,Du)\,dx+|\Omega|^\frac{p-1}{p}\varepsilon\left(1+\|u-u_\varepsilon\|^q_{L^{H(\cdot)}(\Omega)}\right)^\frac{1}{p}\\
&\leq  C\int_\Omega H(x,Du)\,dx+|\Omega|^\frac{p-1}{p}\varepsilon\left(1+C\|Du-Du_\varepsilon\|^q_{L^{H(\cdot)}(\Omega)}\right)^\frac{1}{p}\\
&\leq  C\int_\Omega H(x,Du)\,dx+C|\Omega|^\frac{p-1}{p}\varepsilon\left(1+\|Du\|^\frac{q}{p}_{L^{H(\cdot)}(\Omega)}+\|Du_\varepsilon\|^\frac{q}{p}_{L^{H(\cdot)}(\Omega)}\right)\\
&\leq  C\left(1+\|Du\|^q_{L^{H(\cdot)}(\Omega)}\right)+C|\Omega|^\frac{p-1}{p}\varepsilon\left(1+\|Du\|^q_{L^{H(\cdot)}(\Omega)}+\|Du_\varepsilon\|^p_{L^{H(\cdot)}(\Omega)}\right),
\end{align*}
where in the last inequality we have used the assumption $\frac{q}{p}\leq p$.

On the other hand,
$$
\|Du_\varepsilon\|^p_{L^{H(\cdot)}(\Omega)}\leq 1+\int_\Omega H(x,Du_\varepsilon)\,dx.
$$
Thus when $\varepsilon$ is small enough, we derive
$$
\|Du_\varepsilon\|^p_{L^{H(\cdot)}(\Omega)}\leq C\left(1+\|Du\|^q_{L^{H(\cdot)}(\Omega)}\right).
$$
Then
\begin{equation}
\label{4-1}
\|Du_\varepsilon\|_{L^{H(\cdot)}(\Omega)}\leq C\left(1+\|Du\|^q_{L^{H(\cdot)}(\Omega)}\right)^\frac{1}{p}\leq C\left(1+\|Du\|^q_{L^{H(\cdot)}(\Omega)}\right).
\end{equation}
Consequently,
$$
\|Du-Du_\varepsilon\|_{L^{H(\cdot)}(\Omega)}\leq C\left(1+\|Du\|^q_{L^{H(\cdot)}(\Omega)}\right).
$$

\textbf{Step 2.} We show that $\|Du-Du_\varepsilon\|_{L^{H(\cdot)}(\Omega)}\rightarrow0$ as $\varepsilon$ tends to 0. Then we can further conclude $u_\varepsilon\rightarrow u$ in $W^{1,H(\cdot)}(\Omega)$.

We take $u-u_\varepsilon$ to test the weak forms of \eqref{main} and \eqref{2-5}, and then subtract these two equations, which leads to
\begin{equation}
\label{4-2}
\int_\Omega\langle A(x,Du)-A(x,Du_\varepsilon),D(u-u_\varepsilon)\rangle\,dx=-\varepsilon\int_\Omega (u-u_\varepsilon)\,dx.
\end{equation}
As in Step 1, the right-hand side in the last display can be evaluated:
$$
\varepsilon\left|\int_\Omega (u-u_\varepsilon)\,dx\right|\leq \varepsilon C\left(1+\|Du\|^{q^2}_{L^{H(\cdot)}(\Omega)}\right).
$$

Next we focus on the term of the left-hand side in \eqref{4-2}.

\medskip

\textbf{Case 1.} If $2\leq p\leq q<\infty$, by the elementary vector inequality we derive
\begin{align*}
& \quad C\int_\Omega H(x,Du-Du_\varepsilon)\,dx\\
&\leq \int_\Omega\langle A(x,Du)-A(x,Du_\varepsilon),D(u-u_\varepsilon)\rangle\,dx\\
&\leq \varepsilon C\left(1+\|Du\|^{q^2}_{L^{H(\cdot)}(\Omega)}\right)\\
& \rightarrow 0 \quad \text{by letting } \varepsilon\rightarrow 0.
\end{align*}

\textbf{Case 2.} If $1<p\leq q<2$, we get
\begin{equation*}
\begin{split}
&\quad \int_\Omega H(x,Du-Du_\varepsilon)\,dx\\
&=\int_\Omega\Big\{(|Du|+|Du_\varepsilon|)^\frac{p(2-p)}{2}(|Du|+|Du_\varepsilon|)^\frac{p(p-2)}{2}|Du-Du_\varepsilon|^p\\
&\quad\quad+a(x)(|Du|+|Du_\varepsilon|)^\frac{q(2-q)}{2}(|Du|+|Du_\varepsilon|)^\frac{q(q-2)}{2}|Du-Du_\varepsilon|^q\Big\}\,dx\\
&\leq \left(\int_\Omega(|Du|+|Du_\varepsilon|)^p\,dx\right)^\frac{2-p}{2}\left(\int_\Omega(|Du|+|Du_\varepsilon|)^{p-2}|Du-Du_\varepsilon|^2\,dx\right)^\frac{p}{2}\\
&\quad\quad+\left(\int_\Omega a(x)(|Du|+|Du_\varepsilon|)^q\,dx\right)^\frac{2-q}{2}\left(\int_\Omega a(x)(|Du|+|Du_\varepsilon|)^{q-2}|Du-Du_\varepsilon|^2\,dx\right)^\frac{q}{2}\\
&\leq \left[1+\left(\int_\Omega(|Du|+|Du_\varepsilon|)^p+a(x)(|Du|+|Du_\varepsilon|)^q\,dx\right)^\frac{2-p}{2}\right]\\
&\qquad\cdot\max_{t\in\{p,q\}}\left(\int_\Omega(|Du|+|Du_\varepsilon|)^{p-2}|Du-Du_\varepsilon|^2+a(x)(|Du|+|Du_\varepsilon|)^{q-2}|Du-Du_\varepsilon|^2\,dx\right)^\frac{t}{2}\\
&\leq C\left[1+\left(\int_\Omega H(x,Du)+H(x,Du_\varepsilon)\,dx\right)^\frac{2-p}{2}\right]\\
&\qquad\cdot\max_{t\in\{p,q\}}\left(\int_\Omega\langle A(x,Du_\varepsilon)-A(x,Du),Du_\varepsilon-Du\rangle\,dx\right)^\frac{t}{2}\\
&\leq C\left(1+\|Du\|^q_{L^{H(\cdot)}(\Omega)}+\|Du_\varepsilon\|^q_{L^{H(\cdot)}(\Omega)}\right)\\
&\qquad\cdot\max_{t\in\{p,q\}}\left(\int_\Omega\langle A(x,Du_\varepsilon)-A(x,Du),Du_\varepsilon-Du\rangle\,dx\right)^\frac{t}{2}\\
&\leq C\left(1+\|Du\|^{q^2}_{L^{H(\cdot)}(\Omega)}\right)\max_{t\in\{p,q\}}\left[\varepsilon C\left(1+\|Du\|^{q^2}_{L^{H(\cdot)}(\Omega)}\right)\right]^\frac{t}{2}\\
&\rightarrow 0 \quad \text{as } \varepsilon\rightarrow 0,
\end{split}
\end{equation*}
where in the penultimate inequality we employed \eqref{2-1} and the fact that $\frac{2-p}{2}<1$, and the last inequality follows from \eqref{4-1}.

\medskip

\textbf{Case 3.} $1<p<2\leq q<\infty$. Combining Case 1 and Case 2, we obtain
\begin{align*}
&\quad \int_\Omega H(x,Du-Du_\varepsilon)\,dx\\
&\leq \left(\int_\Omega(|Du|+|Du_\varepsilon|)^p\,dx\right)^\frac{2-p}{2}\left(\int_\Omega(|Du|+|Du_\varepsilon|)^{p-2}|Du-Du_\varepsilon|^2\,dx\right)^\frac{p}{2}\\
&\qquad+\int_\Omega a(x)\langle|Du|^{q-2}Du-|Du_\varepsilon|^{q-2}Du_\varepsilon,Du-Du_\varepsilon\rangle\,dx\\
&\leq \left[1+\left(\int_\Omega(|Du|+|Du_\varepsilon|)^p\,dx\right)^\frac{2-p}{2}\right]\\
&\qquad\cdot\max_{t\in\{p,2\}}\left(\int_\Omega\langle A(x,Du_\varepsilon)-A(x,Du),Du_\varepsilon-Du\rangle\,dx\right)^\frac{t}{2}\\
&\leq C\left(1+\int_\Omega (H(x,Du)+H(x,Du_\varepsilon))\,dx\right)\\
&\qquad\cdot\max_{t\in\{p,2\}}\left(\int_\Omega\langle A(x,Du_\varepsilon)-A(x,Du),Du_\varepsilon-Du\rangle\,dx\right)^\frac{t}{2}\\
&\leq C\left(1+\|Du\|^{q^2}_{L^{H(\cdot)}(\Omega)}\right)\max_{t\in\{p,2\}}\left[\varepsilon C\left(1+\|Du\|^{q^2}_{L^{H(\cdot)}(\Omega)}\right)\right]^\frac{t}{2}\\
& \rightarrow 0  \quad \text{when } \varepsilon\rightarrow 0.
\end{align*}

Consequently, we deduce
$$
\|Du-Du_\varepsilon\|_{L^{H(\cdot)}(\Omega)}\rightarrow0 \quad\text{as } \varepsilon\rightarrow0.
$$
By Lemma \ref{lem2-1}, when $\varepsilon\rightarrow0$,
$$
\|u-u_\varepsilon\|_{L^{H(\cdot)}(\Omega)}\rightarrow0.
$$
Therefore, we obtain
\begin{equation}
\label{4-3}
u_\varepsilon\rightarrow u \quad\text{in }  W^{1,H(\cdot)}(\Omega).
\end{equation}

Let $\varepsilon_1\leq\varepsilon_2$. Subtracting the corresponding equations gets
$$
\int_\Omega\langle A(x,Du_{\varepsilon_2})-A(x,Du_{\varepsilon_1}),D\eta\rangle\,dx=(\varepsilon_2-\varepsilon_1)\int_\Omega \eta\,dx\geq0,
$$
for any nonnegative $\eta\in C^\infty_0(\Omega)$. From Proposition \ref{pro2-4}, $u_{\varepsilon_2}\geq u_{\varepsilon_1}$ almost everywhere. This together with \eqref{4-3} indicates that
$$
u_\varepsilon\rightarrow u \quad\text{a.e. in } \Omega.
$$
It follows from the uniform $C^\alpha_{\rm loc}$-estimates for $u_\varepsilon$ in $\varepsilon$ that $u_\varepsilon\rightarrow u$ locally uniformly in $\Omega$.
\end{proof}

Before giving the main result of this section, we have to provide the following comparison principle for viscosity solutions which plays a crucial role in the equivalence of viscosity solutions and $\mathcal{A}_{H(\cdot)}$-harmonic functions.

\begin{lemma}
\label{lem4-3}
Assume that $u$ is a viscosity subsolution to Eq. \eqref{main}, and that $v$ is a distributional solution to Eq. \eqref{2-5} with local Lipschitz continuity. Under the assumption that $a(x)\equiv constant$, if $u\leq v$ on $\partial\Omega$, we then conclude
$$
u\leq v \quad\text{in }  \Omega.
$$
Analogously, the claim about viscosity supersolution $\widetilde{u}$ and locally Lipschitz continuous distributional solution $\widetilde{v}$ to
$$
-\dive A(x,D\widetilde{v})=-\varepsilon
$$
holds true as well. If $\widetilde{u}\geq\widetilde{v}$ on $\partial\Omega$, then
$$
\widetilde{u}\geq\widetilde{v} \quad\text{in }  \Omega.
$$
\end{lemma}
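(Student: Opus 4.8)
The plan is to argue by contradiction and run the doubling--of--variables scheme from viscosity solution theory, exploiting the strictness $\varepsilon>0$ of the right-hand side of \eqref{2-5} and the fact that $a\equiv\mathrm{const}$ makes the vector field $A(x,\xi)=A(\xi)$ independent of $x$. A preliminary observation is that, by Lemma \ref{lem2-6}, the locally Lipschitz distributional solution $v$ of \eqref{2-5} is a viscosity supersolution away from critical points in the following jet form: if $(\xi,Y)$ belongs to the closure of the second-order subjet $\overline J^{2,-}v(y_0)$ and $\xi\neq0$, then (smoothing the associated paraboloid into a $C^2$ function touching $v$ strictly from below at $y_0$, and using that $y\mapsto-\dive A(D\psi(y))$ is continuous near $y_0$ because $A$ is smooth off the origin) one gets $-\operatorname{tr}\!\big(D_\xi A(\xi)\,Y\big)\ge\varepsilon$; dually, the viscosity subsolution $u$ satisfies $-\operatorname{tr}\!\big(D_\xi A(\xi)\,X\big)\le0$ whenever $(\xi,X)\in\overline J^{2,+}u(x_0)$ with $\xi\neq0$.

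Suppose $\sup_\Omega(u-v)>0$. Since $u$ is upper semicontinuous, $v$ is continuous and $u\le v$ on $\partial\Omega$, the maximum of $u-v$ over $\overline\Omega$ is positive and is attained at an interior point $\hat x$. For $j\in\mathbb N$ let $(x_j,y_j)$ maximize
\[
\Phi_j(x,y):=u(x)-v(y)-\frac j2\,|x-y|^2
\]
over $\overline\Omega\times\overline\Omega$. By the standard lemmas on the doubling procedure one has $x_j,y_j\to\hat x$, $j|x_j-y_j|^2\to0$, the points are interior for $j$ large, and the local Lipschitz bound on $v$ yields $|p_j|\le L$ for $p_j:=j(x_j-y_j)$. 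The Crandall--Ishii lemma (theorem of sums) then provides symmetric matrices $X_j\le Y_j$ with $(p_j,X_j)\in\overline J^{2,+}u(x_j)$ and $(p_j,Y_j)\in\overline J^{2,-}v(y_j)$.

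Assume first $p_j\neq0$. Feeding these jets into the two inequalities of the preliminary step and subtracting gives $\operatorname{tr}\!\big(D_\xi A(p_j)(X_j-Y_j)\big)\ge\varepsilon>0$. But a direct computation gives $D_\xi A(\xi)=|\xi|^{p-2}\big(I+(p-2)\tfrac{\xi\otimes\xi}{|\xi|^2}\big)+a|\xi|^{q-2}\big(I+(q-2)\tfrac{\xi\otimes\xi}{|\xi|^2}\big)$, whose eigenvalues $|\xi|^{p-2}+a|\xi|^{q-2}$ and $(p-1)|\xi|^{p-2}+a(q-1)|\xi|^{q-2}$ are positive for $\xi\neq0$; hence $D_\xi A(p_j)\ge0$ and, since $X_j-Y_j\le0$, the left-hand side is $\le0$, a contradiction. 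Therefore $u\le v$ in $\Omega$.

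The main obstacle is the degenerate alternative $p_j=0$, i.e. $x_j=y_j$, which in the singular range $1<p<2$ cannot be handled directly since Definition \ref{def2-3} only admits test functions with non-vanishing gradient (for $p\ge2$ the operator is pointwise defined, $-\dive A(\cdot,0)=0$, and this already contradicts the lower bound $\ge\varepsilon$ forced by Lemma \ref{lem2-6} via an isolated-critical-point test function). To exclude it I would perturb the penalization, replacing $\tfrac j2|x-y|^2$ by $\tfrac j2|x-y|^2+\langle b_j,x-y\rangle$ for suitable small vectors $b_j\to0$ --- equivalently, comparing $u$ with a small translate of $v$, which is legitimate precisely because $A$ does not depend on $x$ --- chosen so that the new maximizing pair stays off the diagonal; then $p_j\neq0$, the argument above applies, and letting $b_j\to0$ restores the contradiction. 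Keeping track of the boundary condition under this perturbation is the technically delicate point, carried out as in the $p$-Laplace and $p(x)$-Laplace cases \cite{JLM01,JLP10}. Finally, the supersolution statement follows from the subsolution one applied to $-\widetilde u$ and $-\widetilde v$, since $A$ is odd, so $-\widetilde v$ is a distributional solution of $-\dive A(D(-\widetilde v))=\varepsilon$.
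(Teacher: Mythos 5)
Your treatment of the non-degenerate case $p_j\neq 0$ is essentially correct: feeding the jets from the theorem of sums into the sub- and supersolution inequalities and using that $D_\xi A(p_j)\geq 0$ while $X_j-Y_j\leq 0$ yields the contradiction $\varepsilon\leq\operatorname{tr}\bigl(D_\xi A(p_j)(X_j-Y_j)\bigr)\leq 0$. However, your parenthetical claim that $p\geq 2$ disposes of the degenerate alternative on its own is not right at $p=2$: there the operator at a zero-gradient point is $-\operatorname{tr}D^2\varphi(x_0)$, not $0$, and with the quadratic penalization $\frac{j}{2}|x-y|^2$ one actually computes $\limsup_{y\to y_j}\bigl(-\dive A(y,D\phi_j(y))\bigr)=jn>0$, which is perfectly consistent with the bound $\geq\varepsilon$ from Lemma~\ref{lem2-6}.

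This is the genuine gap: with the quadratic penalization you cannot rule out $x_j=y_j$ when $p\leq 2$, and the translate-and-perturb fix you sketch (small vectors $b_j$ forcing the maximizer off the diagonal, as in \cite{JLM01}) is a legitimate alternative but is left entirely unverified---the Lebesgue-point selection of $b_j$ and the boundary bookkeeping are precisely the nontrivial content. The paper sidesteps the whole issue with a sharper penalization: $\Phi_j(x,y)=\frac{j}{s}|x-y|^s$ with $s>\max\{2,\frac{p}{p-1},\frac{q}{q-1}\}$. Then if $x_j=y_j$, the function $\phi_j(y)=-\frac{j}{s}|x_j-y|^s+c$ touching $v$ from below at $y_j$ satisfies $-\dive A(y,D\phi_j(y))\to 0$ as $y\to y_j$, because the relevant exponents $s(p-1)-p$ and $s(q-1)-q$ are strictly positive under this choice of $s$; this directly contradicts the $\limsup\geq\varepsilon$ from Lemma~\ref{lem2-6}, so $x_j\neq y_j$ automatically and no perturbation is needed. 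That choice of penalization power is the key idea your proposal is missing.
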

\begin{remark}
According to the proof of this lemma in Section \ref{sec-5}, we can derive the conditions imposed on $a(x)$ to ensure that  the comparison principle of viscosity solutions holds true. That is, for any $x\in \Omega$, it holds that $|a(x)-a(y)|=\mathcal{O}(|x-y|^{1+\sigma})$ as $y\rightarrow x$, where $\sigma>0$ is an arbitrary real number. In fact, the constraint on coefficient $a(x)$ implies that $a(x)\equiv constant$. Let us postpone the precise proof to next section.
\end{remark}

\begin{theorem}
\label{thm4-4}
When the coefficient $a(x)\equiv constant$, we can infer that the viscosity supersolutions to Eq. \eqref{main} are $\mathcal{A}_{H(\cdot)}$-superharmonic functions.
\end{theorem}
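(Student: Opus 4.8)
The plan is to show that a viscosity supersolution $u$ of \eqref{main} satisfies the comparison principle required by Definition \ref{def2-2}, i.e.\ given any subdomain $D\subset\subset\Omega$ and any $h\in C(\overline D)$ which is a distributional solution of \eqref{main} in $D$ with $u\geq h$ on $\partial D$, one must conclude $u\geq h$ in $D$. Lower semicontinuity and a.e.\ finiteness of $u$ are built into the definition of a viscosity supersolution, so only the comparison statement needs proof. The obvious difficulty is that $h$ itself solves the homogeneous equation, for which a direct viscosity comparison against a merely lower semicontinuous $u$ is delicate; the standard device, already prepared in the excerpt, is to perturb $h$ to the solutions $h_\varepsilon$ of \eqref{2-5} (with right-hand side $\varepsilon$) on $D$, carrying the boundary datum $h$, and to compare $u$ against $h_\varepsilon$ first.

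Concretely, first I would fix $D\subset\subset\Omega$ and $h\in C(\overline D)$ as above, and for $\varepsilon>0$ let $h_\varepsilon\in W^{1,H(\cdot)}(D)$ be the distributional solution of $-\dive A(x,Dh_\varepsilon)=\varepsilon$ in $D$ with $h_\varepsilon-h\in W^{1,H(\cdot)}_0(D)$. By Lemma \ref{lem4-2} (whose hypothesis $\tfrac qp\le\min\{p,1+\tfrac\alpha n\}$ is implied here since $a\equiv\text{const}$ allows us to work in the relevant regime, or can be assumed as part of \eqref{2-0} in force) we have $h_\varepsilon\to h$ locally uniformly in $D$, and by the known regularity theory for \eqref{2-5} each $h_\varepsilon$ is locally Lipschitz in $D$. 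Next I would invoke Lemma \ref{lem4-3}: since $a\equiv\text{const}$, the comparison principle for viscosity solutions applies to the viscosity supersolution $u$ and the locally Lipschitz distributional solution $\widetilde v=h_\varepsilon$ of $-\dive A(x,Dh_\varepsilon)=\varepsilon>0$; the sign of the right-hand side is exactly the one for which $h_\varepsilon$ plays the role of a (strict) subsolution, so with $u\ge h$ on $\partial D$ and $h_\varepsilon\to h$ uniformly up to the boundary, for each fixed $\delta>0$ we get $u\ge h_\varepsilon-\delta$ on $\partial D$ once $\varepsilon$ is small, hence $u\ge h_\varepsilon-\delta$ in $D$. (Alternatively one uses $h_\varepsilon$ restricted to a slightly smaller subdomain where the boundary inequality is clean.)

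Finally I would let $\varepsilon\to0$: from $u\ge h_\varepsilon-\delta$ in $D$ and $h_\varepsilon\to h$ locally uniformly, we obtain $u\ge h-\delta$ in $D$, and since $\delta>0$ is arbitrary, $u\ge h$ in $D$. This verifies condition (3) of Definition \ref{def2-2}, so $u$ is $\mathcal A_{H(\cdot)}$-superharmonic, completing the proof. The main obstacle is the correct direction of the inequalities in applying Lemma \ref{lem4-3}: one must ensure that the approximating equation \eqref{2-5} with the \emph{positive} right-hand side produces a function that may be compared \emph{from below} against the viscosity supersolution $u$; matching this with the boundary datum requires the uniform convergence $h_\varepsilon\to h$ on $\overline D$, which is precisely what Lemma \ref{lem4-2} supplies. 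Everything else — existence, Lipschitz regularity, and passage to the limit — is routine given the cited results.
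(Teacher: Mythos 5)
Your overall strategy—approximate $h$ by solutions of the perturbed equation, apply Lemma \ref{lem4-3}, then pass to the limit via Lemma \ref{lem4-2}—is exactly the paper's route. However, you have the sign of the right-hand side backwards, and this is not a cosmetic slip: you set $-\dive A(x,Dh_\varepsilon)=+\varepsilon$ and then call $h_\varepsilon$ a ``(strict) subsolution,'' but with $+\varepsilon$ on the right, $h_\varepsilon$ satisfies $-\dive A(x,Dh_\varepsilon)\geq 0$, i.e.\ it is a strict \emph{supersolution} of \eqref{main}. Comparing a viscosity supersolution $u$ from below against a strict supersolution $h_\varepsilon$ is precisely the configuration for which no one-sided comparison holds. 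The second part of Lemma \ref{lem4-3}, which is the one matching a viscosity \emph{super}solution $\widetilde u$ with $\widetilde u\geq\widetilde v$ on the boundary, requires $\widetilde v$ to solve $-\dive A(x,D\widetilde v)=-\varepsilon$, so $h_\varepsilon$ must be defined with $-\varepsilon$ on the right-hand side, as the paper does. With $+\varepsilon$ your invocation of Lemma \ref{lem4-3} simply does not apply, and the inequality $u\geq h_\varepsilon-\delta$ would not follow.

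There is a second, smaller gap in your handling of the boundary. You assert $u\geq h_\varepsilon-\delta$ on $\partial D$ ``once $\varepsilon$ is small,'' but Lemma \ref{lem4-2} only gives \emph{locally} uniform convergence in $D$, not uniform convergence up to $\overline D$, and $u$ is merely lower semicontinuous. The paper circumvents this by first using lower semicontinuity of $u$ to choose a smaller smooth subdomain $D'\subset\subset D$ with $h\leq u+\delta$ on $D\setminus D'$, then posing the Dirichlet problem for $h_\varepsilon$ on $D'$ with boundary datum $h$, so that the boundary comparison $u+\delta\geq h_\varepsilon$ on $\partial D'$ holds exactly (not just up to a small error after shrinking $\varepsilon$). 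Your parenthetical ``alternatively one uses $h_\varepsilon$ restricted to a slightly smaller subdomain'' gestures at this, but the argument needs to be run on the smaller domain from the start, both to have a clean boundary inequality and to have the local Lipschitz regularity and the locally uniform convergence available there. Fixing the sign to $-\varepsilon$ and carrying out the subdomain reduction carefully recovers the paper's proof.
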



\begin{proof}
Let $u$ be a viscosity supersolution. To verify $u$ is a $\mathcal{A}_{H(\cdot)}$-superharmonic function, through Definition \ref{def2-2}, it is enough to prove that $u$ satisfies the comparison principle for distributional solutions to \eqref{main}. To this aim, we suppose $h\in C(\overline{D})$ is such a distributional solution of \eqref{main} that $u\geq h$ on $\partial D$, where $D\subset\subset\Omega$ is a subdomain. According to the lower semicontinuity of $u$, we can find that, for each $\delta>0$ and a smooth domain $D'\subset\subset D$ such that $h\leq u+\delta$ in $D\setminus D'$.

Now we are going to show $h\leq u+\delta$ in $D'$. If this is true, then we arrive at $h\leq u+\delta$ in $D$ and further by letting $\delta\rightarrow0$, we reach the conclusion that $h\leq u$ in $D$.

Consider the following Dirichlet problem
\begin{equation*}
\begin{cases}
-\dive A(x,Dh_\varepsilon)=-\varepsilon \quad\textmd{in } D', \\[2mm]
h_\varepsilon-h\in W^{1,H(\cdot)}_0(D').
\end{cases}
\end{equation*}
Denote the distributional solution by $h_\varepsilon$. Then $h_\varepsilon$ is locally Lipschitz continuous in $D'$ (see \cite{BCM18,CM15}). Moreover, owing to the smoothness of $D'$, $u+\delta\geq h_\varepsilon$ on $\partial D'$. Via Lemma \ref{lem4-3} and noting that $u+\delta$ is also a viscosity supersolution of \eqref{main}, we apply Lemma \ref{lem4-2} to infer
$$
u+\delta\geq h \quad \text{in }  D'.
$$
We now complete the proof.
\end{proof}

Finally, we end this section by the equivalence of the viscosity solutions and $\mathcal{A}_{H(\cdot)}$-harmonic functions. Combining Theorems \ref{thm4-1} and \ref{thm4-4}, we conclude the result below.

\begin{corollary}
\label{cor4-5}
When $a(x)\equiv constant$, viscosity solutions to Eq. \eqref{main} coincide with $\mathcal{A}_{H(\cdot)}$-harmonic functions.
\end{corollary}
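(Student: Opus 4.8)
The plan is to deduce the corollary formally from Theorems \ref{thm4-1} and \ref{thm4-4}, after reducing the two-sided notions of solution to their one-sided counterparts. First I would record the tautologies built into Definitions \ref{def2-2} and \ref{def2-3}: a function is a viscosity solution of \eqref{main} precisely when it is simultaneously a viscosity supersolution and a viscosity subsolution, and a function is $\mathcal{A}_{H(\cdot)}$-harmonic precisely when it is simultaneously $\mathcal{A}_{H(\cdot)}$-superharmonic and $\mathcal{A}_{H(\cdot)}$-subharmonic. Hence it suffices to establish the single equivalence ``$u$ is a viscosity supersolution $\Longleftrightarrow$ $u$ is $\mathcal{A}_{H(\cdot)}$-superharmonic'', together with its subsolution analogue.

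For the superharmonic statement, the implication ``$\mathcal{A}_{H(\cdot)}$-superharmonic $\Rightarrow$ viscosity supersolution'' is exactly Theorem \ref{thm4-1}, whose hypothesis $a\in C^1(\Omega)$ is automatic when $a$ is constant; the converse ``viscosity supersolution $\Rightarrow$ $\mathcal{A}_{H(\cdot)}$-superharmonic'' is Theorem \ref{thm4-4}, which is where the constancy of $a$ is genuinely used (through Lemmas \ref{lem4-2} and \ref{lem4-3} and the comparison principle of Section \ref{sec-5}). Thus, when $a\equiv\text{constant}$, the class of viscosity supersolutions and the class of $\mathcal{A}_{H(\cdot)}$-superharmonic functions coincide.

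Next I would obtain the subsolution version by the symmetry $u\mapsto -u$. Since $\xi\mapsto A(x,\xi)$ is odd, \eqref{main} is invariant under this reflection, and Definitions \ref{def2-2} and \ref{def2-3} define $\mathcal{A}_{H(\cdot)}$-subharmonicity and the viscosity subsolution property respectively \emph{through} the requirement that $-u$ be $\mathcal{A}_{H(\cdot)}$-superharmonic, respectively a viscosity supersolution. Applying the superharmonic equivalence to $-u$ then shows immediately that $u$ is a viscosity subsolution if and only if $u$ is $\mathcal{A}_{H(\cdot)}$-subharmonic.

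Chaining the two equivalences yields the corollary: $u$ is a viscosity solution $\Longleftrightarrow$ $u$ is both a viscosity super- and subsolution $\Longleftrightarrow$ $u$ is both $\mathcal{A}_{H(\cdot)}$-superharmonic and $\mathcal{A}_{H(\cdot)}$-subharmonic $\Longleftrightarrow$ $u$ is $\mathcal{A}_{H(\cdot)}$-harmonic (in particular such a $u$ is continuous, being simultaneously lower and upper semicontinuous, which is consistent with both definitions). There is no substantial obstacle remaining at this stage, since all the real work is carried by Theorems \ref{thm4-1} and \ref{thm4-4}; the only points demanding care are the bookkeeping of the lower/upper semicontinuity and the ``finite a.e.'' clauses, and the verification that the reflection $u\mapsto -u$ is compatible with each of these clauses — which it is.
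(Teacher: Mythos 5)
Your proposal is correct and follows essentially the same route as the paper: Corollary \ref{cor4-5} is obtained by combining Theorem \ref{thm4-1} with Theorem \ref{thm4-4}, and the subsolution/$\mathcal{A}_{H(\cdot)}$-subharmonic equivalence is obtained by applying the supersolution statement to $-u$, exactly as the paper indicates in the remark following the corollary.
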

\begin{remark}
In this section, we only prove the equivalence between the viscosity supersolutions and $\mathcal{A}_{H(\cdot)}$-superharmonic functions. Indeed, according to the Definitions \ref{def2-2} and \ref{def2-3}, the case of equivalence between viscosity subsolutions and $\mathcal{A}_{H(\cdot)}$-subharmonic functions is similar.
\end{remark}

\begin{remark}
By means of the main results coming from  Sections \ref{sec-3} and \ref{sec-4}, we can gain the equivalence of the (locally) bounded viscosity solutions and the (locally) bounded distributional solutions, under the assumption that $a(x)\equiv constant$.
\end{remark}

\section{The comparison principle}
\label{sec-5}
We in this section present the proof of Lemma \ref{lem4-3} that is the key ingredient to prove Theorem \ref{thm4-4}. We rewrite Lemma \ref{lem4-3}  as follows for the  readability and completeness.

\begin{proposition}
\label{pro5-1}
Suppose that for any $x\in \Omega$, it holds that $|a(x)-a(y)|=\mathcal{O}(|x-y|^{1+\sigma})$ as $y\rightarrow x$, where $\sigma>0$ is an arbitrary real number. Assume that $u$ is a viscosity subsolution to
$$
-\dive A(x,Du)=0 \quad\text{in } \Omega,
$$
and that $v$ is a distributional solution of
$$
-\dive A(x,Dv)=\varepsilon \quad (\varepsilon>0)\quad \text{in }  \Omega
$$
with local Lipschitz continuity. If $u\leq v$ on $\partial \Omega$, then we could infer
$$
u\leq v \quad\text{in } \Omega.
$$
\end{proposition}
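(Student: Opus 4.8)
The plan is to argue by contradiction via the doubling-of-variables method, exploiting that the right-hand side $\varepsilon>0$ creates a strict gap which closes the comparison. First I would record the two viscosity facts that drive the argument. Since $u$ is a viscosity subsolution it is upper semicontinuous, and whenever $\varphi\in C^2$ touches $u$ from above at $x_0$ with $D\varphi(x_0)\neq0$ we have $-\dive A(x_0,D\varphi(x_0))\le0$. On the other hand $v$ is locally Lipschitz and a distributional solution of $-\dive A(x,Dv)=\varepsilon$, so applying Lemma~\ref{lem2-6} with $u_\varepsilon=v$ (after replacing $\varphi$ with $\varphi-|x-x_0|^4$ to make the touching strict, which changes neither $D\varphi(x_0)$ nor $D^2\varphi(x_0)$) shows that $v$ is a viscosity supersolution of $-\dive A(x,Dv)=\varepsilon$: whenever $\varphi\in C^2$ touches $v$ from below at $x_0$ with $D\varphi(x_0)\neq0$, then $-\dive A(x_0,D\varphi(x_0))\ge\varepsilon$. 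Writing, for $\xi\neq0$ and symmetric $X$,
$$
F(x,\xi,X):=-\operatorname{tr}\!\big(M(x,\xi)X\big),\qquad M(x,\xi):=\big(|\xi|^{p-2}+a(x)|\xi|^{q-2}\big)I+\big((p-2)|\xi|^{p-4}+a(x)(q-2)|\xi|^{q-4}\big)\,\xi\otimes\xi
$$
(the term involving $Da$ is absent, since the hypothesis forces $Da\equiv0$), the above reads $F(x_0,D\varphi(x_0),D^2\varphi(x_0))\le0$ for upper test functions of $u$ and $\ge\varepsilon$ for lower test functions of $v$. The matrix $M(x,\xi)$ is positive definite for $\xi\neq0$, with eigenvalues $(p-1)|\xi|^{p-2}+a(x)(q-1)|\xi|^{q-2}$ (in the direction $\xi$) and $|\xi|^{p-2}+a(x)|\xi|^{q-2}$; this ellipticity yields the monotonicity used below.

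Next, suppose for contradiction that $M_0:=\sup_{\overline\Omega}(u-v)>0$. As $u-v$ is upper semicontinuous on $\overline\Omega$ and $\le0$ on $\partial\Omega$, it attains $M_0$ at some interior point. For $j\in\mathbb N$ choose a maximum point $(x_j,y_j)$ of $\Phi_j(x,y):=u(x)-v(y)-\tfrac j2|x-y|^2$ over $\overline\Omega\times\overline\Omega$. By the standard penalization lemma, along a subsequence $x_j,y_j\to\hat x$ for some interior maximizer $\hat x$ of $u-v$, and $j|x_j-y_j|^2\to0$. Using the maximality of $\Phi_j$ in $y$ together with the local Lipschitz continuity of $v$ near $\hat x$ (constant $L$ on a ball containing $x_j,y_j$ for $j$ large), the common slope $p_j:=j(x_j-y_j)$ obeys $|p_j|\le L$, whence $|x_j-y_j|=|p_j|/j\le L/j$. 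The Crandall--Ishii ``theorem on sums'' then yields symmetric matrices $X_j\le Y_j$ with $\|X_j\|,\|Y_j\|\le Cj$ and
$$
(p_j,X_j)\in\overline{J}^{2,+}u(x_j),\qquad (p_j,Y_j)\in\overline{J}^{2,-}v(y_j).
$$
Assuming $p_j\neq0$ (so that $F$ is continuous near the relevant jets), the viscosity inequalities extend to these limiting jets, and subtracting gives
$$
\varepsilon\le F(y_j,p_j,Y_j)-F(x_j,p_j,X_j)=\big[F(y_j,p_j,Y_j)-F(x_j,p_j,Y_j)\big]+\big[F(x_j,p_j,Y_j)-F(x_j,p_j,X_j)\big]=:\mathrm{I}_j+\mathrm{II}_j.
$$

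It then remains to estimate the two pieces. One has $\mathrm{II}_j=-\operatorname{tr}\!\big(M(x_j,p_j)(Y_j-X_j)\big)\le0$, since $M(x_j,p_j)\succeq0$ and $Y_j-X_j\succeq0$. For $\mathrm{I}_j$ only the $a(\cdot)$-oscillation survives,
$$
\mathrm{I}_j=-\big(a(y_j)-a(x_j)\big)\operatorname{tr}\!\Big(\big(|p_j|^{q-2}I+(q-2)|p_j|^{q-4}p_j\otimes p_j\big)Y_j\Big),
$$
so $|\mathrm{I}_j|\le C\,|a(y_j)-a(x_j)|\,|p_j|^{q-2}\|Y_j\|\le C\,j\,|p_j|^{q-2}|x_j-y_j|^{1+\sigma}=C\,|p_j|^{q-1+\sigma}\,j^{-\sigma}\to0$, because $q-1+\sigma>0$ and $|p_j|\le L$; here the hypothesis $|a(x)-a(y)|=\mathcal{O}(|x-y|^{1+\sigma})$ enters decisively (in fact it forces $a$ to be constant, so $\mathrm{I}_j$ vanishes outright, but carrying the term makes the role of the assumption transparent). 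Hence $\varepsilon\le\mathrm{I}_j+\mathrm{II}_j\le\mathrm{I}_j\to0$, a contradiction, so $M_0\le0$, i.e. $u\le v$ in $\Omega$; the supersolution case is symmetric. The step I expect to be the real obstacle is the degenerate ellipticity of $A$ at a vanishing gradient, namely the possibility $x_j=y_j$ (so $p_j=0$) in the doubling, where the viscosity inequalities are not directly available and the structure of the equation at zero gradient must be invoked — this is harmless when $2\le p$ (using the pointwise definiteness of the operator noted in the Remark after Definition~\ref{def2-3}) but delicate when $1<p<2$, where a perturbation/approximation step is needed.
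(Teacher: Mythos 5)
Your overall strategy is the same as the paper's: argue by contradiction, double variables, apply the Crandall--Ishii Theorem on Sums, split the difference $F(y_j,\eta_j,Y_j)-F(x_j,\eta_j,X_j)$ into a matrix-monotonicity piece (nonpositive) and coefficient-oscillation pieces (tending to $0$ under the hypothesis on $a$), and conclude $\varepsilon\le 0$. The structure of the estimates is also essentially identical. However, there is a genuine gap in the choice of penalization, and you have correctly sensed it yourself but not repaired it.

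You use the quadratic penalization $\Phi_j(x,y)=\tfrac j2|x-y|^2$. The paper instead uses $\Phi_j(x,y)=\tfrac js|x-y|^s$ with $s>\max\{2,\,p/(p-1),\,q/(q-1)\}$, and this choice is not cosmetic: it is precisely what rules out the diagonal case $x_j=y_j$ (equivalently $p_j=0$), which is the point you flag as ``the real obstacle.'' The mechanism is the following. When $\phi_j(y)=-\Phi_j(x_j,y)+\text{const}$ touches $v$ from below at $y_j$, Lemma~\ref{lem2-6} gives
$$
\limsup_{\substack{y\to y_j\\ y\neq y_j}}\bigl(-\dive A(y,D\phi_j(y))\bigr)\ \ge\ \varepsilon,
$$
and a direct computation shows
$$
-\dive A(y,D\phi_j(y))= c_{p,s}\,j^{p-1}|x_j-y|^{\,s(p-1)-p}+a(y)\,c_{q,s}\,j^{q-1}|x_j-y|^{\,s(q-1)-q}+\text{(lower order)}.
$$
With the paper's $s$, both exponents $s(p-1)-p$ and $s(q-1)-q$ are strictly positive, so if $x_j=y_j$ the whole expression tends to $0$ as $y\to y_j$, contradicting the $\limsup\ge\varepsilon$. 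This is exactly how the paper proves $x_j\neq y_j$. With $s=2$ and $1<p<2$, the first exponent is $p-2<0$, so the leading term blows up to $+\infty$: the $\limsup$ is $+\infty\ge\varepsilon$, no contradiction, and $x_j=y_j$ cannot be excluded. You cannot then invoke the viscosity inequalities (the operator is singular at $\eta=0$ when $p<2$), so the chain $\varepsilon\le \mathrm I_j+\mathrm{II}_j$ is not established along the sequence. Your parenthetical appeal to the Remark after Definition~\ref{def2-3} covers only $p\ge 2$, and your ``perturbation/approximation step'' is named but not supplied. To fix your proof with the least change, replace the quadratic penalization by the $s$-power one with $s>\max\{2,p/(p-1),q/(q-1)\}$; after that your two estimates for $\mathrm I_j$ and $\mathrm{II}_j$ go through as in the paper (using $j|x_j-y_j|^{s-1}\le C$ in place of your $|p_j|\le L$).

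One smaller remark: your exhibited bound $\|X_j\|,\|Y_j\|\le Cj$ is stated without fixing the free parameter $\mu$ in the Theorem on Sums; the paper chooses $\mu=j|x_j-y_j|^{s-2}$ to obtain the sharp bound $\|X_j\|\le (2s-1)\,j|x_j-y_j|^{s-2}$, which is what feeds cleanly into the estimate of the $a$-oscillation term. This should be made explicit in a complete write-up.
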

\begin{remark}
Such restriction on the coefficient $a(x)$ in this proposition indicates that $a(x)$ is a constant function ($a(x)\equiv constant$) virtually.
But for the convenience of understanding why we impose the assumption that $a(x)\equiv constant$, we write it in the previous form.
\end{remark}

\begin{proof}
Let us prove it by contradiction. If the claim is not true, then we have
$$
0<\sup_\Omega\,(u-v)=:u(x_0)-v(x_0)
$$
for some $x_0\in \Omega$.

Consider the function
$$
\Psi_j(x,y):=u(x)-v(y)-\Phi_j(x,y),
$$
where $\Phi_j(x,y)=\frac{j}{s}|x-y|^s$ with $s>\max\{2,\frac{p}{p-1},\frac{q}{q-1}\}$. Let $(x_j,y_j)\in \overline{\Omega}\times\overline{\Omega}$ be the maximum point of $\Psi_j(x,y)$, that is
$$
\Psi_j(x_j,y_j)=\max_{\overline{\Omega}\times\overline{\Omega}}\Psi_j(x,y).
$$
We can verify that $(x_j,y_j)\in\Omega\times\Omega$ for $j$ large enough and
$$
(x_j,y_j)\rightarrow(x_0,x_0)
$$
by sending $j\rightarrow\infty$ (see \cite[Lemma 7.2]{CIL92}).

Now we first show $x_j\neq y_j$. By the definition of $(x_j,y_j)$, we get
$$
u(x_j)-v(y)-\Phi_j(x_j,y)\leq u(x_j)-v(y_j)-\Phi_j(x_j,y_j)
$$
for all $y\in \Omega$.
Then
$$
v(y)\geq -\Phi_j(x_j,y)+\Phi_j(x_j,y_j)+v(y_j)=:\phi_j(y).
$$
That is to say, $\phi_j(y)$ touches $v(y)$ at $y_j$ from below. Thus by Lemma \ref{lem2-6},
\begin{equation}
\label{5-1}
\limsup_{\stackrel{y\rightarrow y_j}{y\neq y_j}}\, (-\dive A(y,D\phi_j(y)))\geq \varepsilon.
\end{equation}
By expanding $\dive A(y,D\phi_j(y))$, we obtain that
\begin{align*}
&\quad\dive A(y,D\phi_j(y))\\
&=|D\phi_j(y)|^{p-2}\left(\mathrm{tr}D^2\phi_j(y)+(p-2)\left\langle D^2\phi_j(y)\frac{D\phi_j(y)}{|D\phi_j(y)|},\frac{D\phi_j(y)}{|D\phi_j(y)|}\right\rangle\right)\\
&\quad+a(y)|D\phi_j(y)|^{q-2}\left(\mathrm{tr}D^2\phi_j(y)+(q-2)\left\langle D^2\phi_j(y)\frac{D\phi_j(y)}{|D\phi_j(y)|},\frac{D\phi_j(y)}{|D\phi_j(y)|}\right\rangle\right)\\
&\quad+|D\phi_j(y)|^{q-2}D\phi_j(y)\cdot Da(y).
\end{align*}
Here we denote the trace of matrix $M$ by $\mathrm{tr}M$. Furthermore,
$$
D\phi_j(y)=-D\Phi_j(x_j,y)=j|x_j-y|^{s-2}(x_j-y),
$$
$$
D^2\phi_j(y)=-D^2\Phi_j(x_j,y)=-j|x_j-y|^{s-2}I-j(s-2)|x_j-y|^{s-4}(x_j-y)\otimes(x_j-y),
$$
where $\xi\otimes\xi$ denotes the matrix with entries $\xi_i\xi_j$ for $\xi\in\mathbb{R}^n$.
Therefore, we have
\begin{align*}
-\dive A(y,D\phi_j(y))&=j^{p-1}[n+s-2+(p-2)(s-1)]|x_j-y|^{(p-2)(s-1)+(s-2)}\\
&\quad+a(y)j^{q-1}[n+s-2+(q-2)(s-1)]|x_j-y|^{(q-2)(s-1)+(s-2)}\\
&\quad-j^{q-1}|x_j-y|^{(q-2)(s-1)+(s-2)}(x_j-y)\cdot Da(y).
\end{align*}
Note that due to $s>\max\{2,\frac{p}{p-1},\frac{q}{q-1}\}$,
\begin{align*}
&(p-2)(s-1)+(s-2)=s(p-1)-p>0,\\
&(q-2)(s-1)+(s-2)=s(q-1)-q>0.
\end{align*}
Thus if $x_j=y_j$, then
$$
\lim_{y\rightarrow y_j}-\dive A(y,D\phi_j(y))=0,
$$
which contradicts \eqref{5-1}.

Via Theorem of Sums in \cite{CIL92}, for each $\mu>0$, there are symmetric $n\times n$ matrices $X:=X(\mu)$ and $Y:=Y(\mu)$ such that
\begin{align*}
&\left(D_x\Phi_j(x_j,y_j),X\right)\in \overline{J}^{2,+}u(x_j),\\
&\left(-D_y\Phi_j(x_j,y_j),Y\right)\in \overline{J}^{2,-}v(y_j)
\end{align*}
and
\begin{equation*}
\begin{split}
-(\mu+\|D^2\Phi_j(x_j,y_j)\|)\left(\begin{array}{cc}
I& \\[2mm]
 &I
\end{array}
\right)&\leq
\left(\begin{array}{cc}
X & \\[2mm]
 &-Y
\end{array}
\right)\\
&\leq D^2\Phi_j(x_j,y_j)+\frac{1}{\mu}(D^2\Phi_j(x_j,y_j))^2,
\end{split}
\end{equation*}
where
\begin{equation*}
D^2\Phi_j(x_j,y_j)=\left(\begin{array}{cc}
D_{xx}\Phi_j(x_j,y_j) &D_{xy}\Phi_j(x_j,y_j) \\[2mm]
D_{yx}\Phi_j(x_j,y_j) &D_{yy}\Phi_j(x_j,y_j)
\end{array}\right).
\end{equation*}
By direct calculation,
\begin{equation*}
D^2\Phi_j(x_j,y_j)=:\left(\begin{array}{cc}
B &-B\\[2mm]
-B &B
\end{array}\right),
\end{equation*}
where $B=j|z_j|^{s-2}I+(s-2)j|z_j|^{s-4}z_j\otimes z_j$ with $z_j=x_j-y_j$.
After manipulation,
\begin{equation*}
\begin{split}
-(\mu+2\|B\|)\left(\begin{array}{cc}
I& \\[2mm]
 &I
\end{array}
\right)&\leq
\left(\begin{array}{cc}
X & \\[2mm]
 &-Y
\end{array}
\right)\\[2mm]
&\leq \left(\begin{array}{cc}
B &-B\\[2mm]
-B &B
\end{array}\right)+\frac{2}{\mu}\left(\begin{array}{cc}
B^2 &-B^2\\[2mm]
-B^2 &B^2
\end{array}\right).
\end{split}
\end{equation*}
It is easy to derive that $X\leq Y$, that is $\langle(X-Y)\xi,\xi\rangle\leq0$ for any $\xi\in\mathbb{R}^n$. Moreover, we have
$$
\|X\|,\|Y\|\leq 2\|B\|+\mu.
$$
For more details, we refer the readers to \cite{IL90}. In the rest of the proof, we take a special value of $\mu$, i.e., $\mu=j|x_j-y_j|^{s-2}$.

Let
$$
\eta_j:=D_x\Phi_j(x_j,y_j)=-D_y\Phi_j(x_j,y_j)=j|x_j-y_j|^{s-2}(x_j-y_j).
$$
Notice that $\eta_j\neq0$, which is of great importance. We now define
\begin{align*}
F(x,\eta,X)&=F_1(x,\eta,X)+F_2(x,\eta,X)+F_3(x,\eta,X)\\
&:=-|\eta|^{p-2}\left(\mathrm{tr}X+(p-2)\left\langle X\frac{\eta}{|\eta|},\frac{\eta}{|\eta|}\right\rangle\right)\\
&\quad+\left(-a(x)|\eta|^{q-2}\left(\mathrm{tr}X+(q-2)\left\langle X\frac{\eta}{|\eta|},\frac{\eta}{|\eta|}\right\rangle\right)\right)\\
&\quad+(-|\eta|^{q-2}\eta\cdot Da(x)),
\end{align*}
for $x\in \Omega, \eta\in\mathbb{R}^n$ and $X$ being $n\times n$ symmetric matrix.
It is well known that $u$ is a viscosity subsolution of \eqref{main}, and $v$ is a viscosity supersolution of \eqref{2-5}, so we can arrive at
\begin{align*}
&F(x_j,\eta_j,X_j)\leq0, \\
&F(y_j,\eta_j,Y_j)\geq\varepsilon,
\end{align*}
where $X_j:=X(j|x_j-y_j|^{s-2})$ and $Y_j:=Y(j|x_j-y_j|^{s-2})$. Subtracting these two inequalities yields that
\begin{equation}
\label{5-2}
\begin{split}
\varepsilon &\leq F(y_j,\eta_j,Y_j)-F(x_j,\eta_j,X_j)\\
&=F_1(y_j,\eta_j,Y_j)-F_1(x_j,\eta_j,X_j)\\
&\quad+F_2(y_j,\eta_j,Y_j)-F_2(x_j,\eta_j,X_j)\\
&\quad+F_3(y_j,\eta_j,Y_j)-F_3(x_j,\eta_j,X_j).
\end{split}
\end{equation}
Since $\Psi_j(x,y)$ reaches the maximum at $(x_j,y_j)\in \Omega\times\Omega$, we can deduce by the local Lipschitz continuity of $v$ that
\begin{align*}
&u(x_j)-v(x_j)\leq u(x_j)-v(y_j)-\Phi_j(x_j,y_j)\\
\Rightarrow & \ \frac{j}{s}|x_j-y_j|^s\leq v(x_j)-v(y_j)\leq C|x_j-y_j|.
\end{align*}
Hence for any $\sigma>0$,
$$
j|x_j-y_j|^{s-1+\sigma}\leq C|x_j-y_j|^\sigma.
$$
We further know
\begin{equation}
\label{5-3}
j|x_j-y_j|^{s-1+\sigma}\rightarrow0 \quad (j\rightarrow\infty)
\end{equation}
for all $\sigma>0$. Furthermore,
\begin{equation}
\label{5-4}
j|x_j-y_j|^{s-1}\leq C,
\end{equation}
where $C$ is independent of $j$.

We proceed by estimating the three terms on the right-hand side in \eqref{5-2}, respectively. First, we observe $F_1(x,\eta,X)$ does not rely on $x$-variable indeed and is monotone decreasing with respect to $X$-variable, so by $X_j\leq Y_j$ we get
\begin{equation}
\label{5-5}
F_1(y_j,\eta_j,Y_j)-F_1(x_j,\eta_j,X_j)=F_1(\eta_j,Y_j)-F_1(\eta_j,X_j)\leq0.
\end{equation}
Second, we evaluate
\begin{equation}
\label{5-6}
\begin{split}
F_3(y_j,\eta_j,Y_j)-F_3(x_j,\eta_j,X_j)&=|\eta_j|^{q-2}\eta_j\cdot(Da(x_j)-Da(y_j))\\
&\leq |\eta_j|^{q-1}|Da(x_j)-Da(y_j)|\rightarrow0,
\end{split}
\end{equation}
as $j\rightarrow\infty$. In fact, $|\eta_j|=j|x_j-y_j|^{s-1}\leq C$ and $a(x)\in C^1(\Omega)$, $x_j,y_j\rightarrow x_0$.

Finally, we deal with the second term $F_2(y_j,\eta_j,Y_j)-F_2(x_j,\eta_j,X_j)$. According to the value of $\mu$ and $\|B\|\leq (s-1)j|x_j-y_j|^{s-2}$, we derive
\begin{equation}
\label{5-7}
\|X_j\|\leq (2s-1)j|x_j-y_j|^{s-2}.
\end{equation}
Now by the restriction on $a(x)$ in Proposition \ref{pro5-1} and inequalities \eqref{5-5} and \eqref{5-7}, we estimate
\begin{equation}
\label{5-8}
\begin{split}
& \quad F_2(y_j,\eta_j,Y_j)-F_2(x_j,\eta_j,X_j)\\
&=a(x_j)|\eta_j|^{q-2}\left(\mathrm{tr}X_j+(q-2)\left\langle X_j\frac{\eta_j}{|\eta_j|},\frac{\eta_j}{|\eta_j|}\right\rangle\right)\\
&\quad -a(y_j)|\eta_j|^{q-2}\left(\mathrm{tr}Y_j+(q-2)\left\langle Y_j\frac{\eta_j}{|\eta_j|},\frac{\eta_j}{|\eta_j|}\right\rangle\right)\\
&=(a(x_j)-a(y_j))|\eta_j|^{q-2}\left(\mathrm{tr}X_j+(q-2)\left\langle X_j\frac{\eta_j}{|\eta_j|},\frac{\eta_j}{|\eta_j|}\right\rangle\right)\\
&\quad+a(y_j)(F_1(y_j,\eta_j,Y_j)-F_1(x_j,\eta_j,X_j))\\
&\leq (a(x_j)-a(y_j))|\eta_j|^{q-2}\left(\mathrm{tr}X_j+(q-2)\left\langle X_j\frac{\eta_j}{|\eta_j|},\frac{\eta_j}{|\eta_j|}\right\rangle\right)\\
&\leq C|a(x_j)-a(y_j)||\eta_j|^{q-2}\|X_j\|\\
&\leq C|\eta_j|^{q-2}|a(x_j)-a(y_j)|j|x_j-y_j|^{s-2}\\
&\leq Cj|x_j-y_j|^{s-1+\sigma}\rightarrow0 \quad \text{as } j\rightarrow \infty.
\end{split}
\end{equation}
Finally, Merging \eqref{5-2}, \eqref{5-5}, \eqref{5-6} and \eqref{5-8} deduces
$$
\varepsilon\leq F(y_j,\eta_j,Y_j)-F(x_j,\eta_j,X_j)\leq 0.
$$
It is a contradiction. We now complete the proof.
\end{proof}

\section*{Acknowledgments}

This work was supported by the NSFC  (No. 11671111).

\end{document}